\documentclass[12pt]{amsart}

\usepackage[utf8]{inputenc}
\usepackage[T1]{fontenc}
\usepackage[a4paper]{geometry}
\geometry{verbose,tmargin=3cm,bmargin=3cm,lmargin=3.5cm,rmargin=3.5cm}
\usepackage{verbatim}
\usepackage{amsthm}
\usepackage{amstext}
\usepackage{amssymb}
\usepackage{graphicx}

\numberwithin{equation}{section}
\numberwithin{figure}{section}
\theoremstyle{remark}
\newtheorem*{acknowledgement*}{Acknowledgement}
\theoremstyle{plain}
\newtheorem{thm}{Theorem}
\theoremstyle{definition}
\newtheorem{defn}[thm]{Definition}
\theoremstyle{plain}

\theoremstyle{definition}
\newtheorem{example}[thm]{Example}
\theoremstyle{remark}

\theoremstyle{plain}

\theoremstyle{plain}
\newtheorem{prop}[thm]{Proposition}

\newcommand{\R}{\mathbb R}
\newcommand{\C}{\mathbb C}

\newcommand{\micro}{\bf mic}
\newcommand{\ext}{\bf ext}

\newcommand{\vect}{\bf Vect}
\newcommand{\Sympl}{\bf Sympl}
\newcommand{\poisson}{\bf Poisson}

\newcommand{\neutral}{\textbf{E}}
\newcommand{\neutralmorph}{\textbf{e}}
\newcommand{\Mon}[1]{{\bf Mon}(#1)}
\newcommand{\mon}{{M}}

\newcommand{\ExtSympl}{{\Sympl}^{\ext}}
\newcommand{\MicroSympl}{{\Sympl}_{\micro}}
\newcommand{\ExtMicroSympl}{{\Sympl}_{\micro}^{\ext}}
\newcommand{\SymplGpd}{\bf SymplGpd}
\newcommand{\ExtSymplGpd}{{\SymplGpd}^{\ext}}

\newcommand{\ExtMicroSymplGpd}{{\SymplGpd}_{\micro}^{\ext}}

\newcommand{\Cot}{{T}^*}
\newcommand{\id}{\operatorname{id}}
\newcommand{\graph}{\operatorname{gr}}

\begin{document}

\title{Symplectic microgeometry III: monoids}

\author[A. S. Cattaneo, B. Dherin and A. Weinstein]{Alberto S. Cattaneo, Benoit Dherin and Alan Weinstein}

\address{Institut f\"ur Mathematik\\
Universit\"at Z\"urich--Irchel\\
Winterthurerstrasse 190, CH-8057 Z\"urich\\
Switzerland}
\email{alberto.cattaneo@math.uzh.ch}

\address{ICMC-USP\\
Universidade de S\~ao Paulo\\
Avenida Trabalhador S\~ao-carlense, 400, Centro\\
CEP: 13566-590, São Carlos, SP\\
Brazil}
\email{dherin@icmc.usp.br}

\address{Department of Mathematics\\
University of California\\
Berkeley, CA 94720-3840}
\email{alanw@math.berkeley.edu}


\begin{abstract}
We show that the category of Poisson manifolds and Poisson maps, the category of symplectic microgroupoids and lagrangian submicrogroupoids (as morphisms), and the category of monoids and monoid morphisms in the microsymplectic category are equivalent symmetric monoidal categories.  
\end{abstract}

\maketitle

\tableofcontents{}

\section{Introduction}

This paper is a step toward a geometric and functorial approach to
Poisson manifold quantization. At its core is a micro-version of the
geometric approach to the quantization of Poisson manifolds by symplectic
groupoids, as developed in \cite{CDW1987,karasev1987,KM1993,weinstein1991,zakrzewski1990I,zakrzewski1990II}.
This micro-version is essentially obtained by replacing groupoids
by groupoid germs (or ``microgroupoids'') and canonical relations
by symplectic micromorphisms, which are the special canonical relation
germs introduced in \cite{CDW2010,CDW2011}. After these replacements,
geometric constructions that are only functorial-looking in the world
of groupoids and canonical relations become honest functors in the
world of microgroupoids and symplectic micromorphisms. We will use
quotation marks to remind us when we are not dealing with honest categories
and functors. 

In this paper, we are mostly concerned with developing a functorial
micro-version of two constructions. The first construction is a symplectization
``functor'' $\Sigma$, as in Fernandes \cite{fernandes2006}. It replaces
a Poisson manifold with its source 1-connected symplectic groupoid
and a Poisson map with the lagrangian subgroupoid integrating its
graph, which is regarded as a coisotropic submanifold. The second
functorial-looking construction, called $M$, replaces a symplectic
groupoid with a monoid object in the symplectic ``category'' (i.e.,
the category of symplectic manifolds and canonical relations) and
a lagrangian subgroupoid in the product of two symplectic groupoids
with a monoid map. 

In the micro-world, $\Sigma$ and $M$ become honest functors, both
of which are equivalences of symmetric monoidal categories. Moreover,
they have additional advantages over their macro-world counterparts:
\begin{itemize}
\item The domain of $\Sigma$ is now the category of \textsl{all} Poisson
manifolds and Poisson maps, as opposed to the macro-world where $\Sigma$
is only defined for the class of \textsl{integrable} Poisson manifolds;
\item a monoid object in the microsymplectic category characterizes the
symplectic microgroupoid it comes from completely, as opposed to the
macro-world where the inverse map is not part of the monoid structure
alone (see Example \ref{exam: sing. gpd} and \cite{Canez2011} for
similar issues related to the inverse map).
\end{itemize}
The composition $Z=M\circ\Sigma$ (which we call the ``Zakrzewski
functor'') establishes an equivalence of monoidal categories between
the category of Poisson manifolds and Poisson maps and the category
of monoids and monoid morphisms in the microsymplectic category. This
is the main result of this paper. In itself, it gives a categorical
formulation of Poisson geometry; it allows us to replace, in a functorial
way, the Poisson bivector field with a commutative diagram in a monoidal
category. Moreover, it gives an appropriate categorical framework
for Poisson manifold functorial quantization by symplectic (micro)groupoid
methods. This framework may also be relevant for functorial aspects of star-product dequantization as in \cite{Karabegov2003, Karabegov2004, Karabegov2005, Karabegov2006}. 

\begin{acknowledgement*}
We thank Domenico Fiorenza for useful suggestions. A.S.C. acknowledges partial support from SNF Grant 20-131813. B.D.
acknowledges partial support from SNF Grant PA002-113136, NWO Grant
613.000.602 and FAPESP grant 2010/15069-8, as well as the hospitality
of the ICMC of the University of S\~ao Paulo, in S\~ao Carlos. B.D. thanks
Ittay Weiss for inspiring discussions about monoids in Rel. In addition,
B.D. thanks Ieke Moerdijk and Marius Crainic for providing a highly
stimulating research environment at Utrecht University, where part
of this paper was done. A.W. acknowledges partial support from NSF
grant DMS-0707137 and the hospitality of the Institut Math\'ematique
de Jussieu.
\end{acknowledgement*}

\section{Preliminaries}

\subsection{Poisson geometry}

We fix here some notations and terminology. 

A \textbf{Poisson manifold} is a smooth manifold $A$ whose algebra
of smooth functions is endowed with a Poisson bracket; i.e. a Lie
bracket $\{\,,\,\}$ on the space of smooth functions $C^{\infty}(A)$
that is a derivation in each argument. Thus, this bracket is completely
determined by a bivector field $\Pi\in\Gamma(\wedge^{2}TA)$ defined
by $\{f,g\}(x)=\langle df\wedge dg,\Pi\rangle,$ where $f,g\in C^{\infty}(A)$.
A \textbf{Poisson map} $\phi:B\rightarrow A$ is a smooth map such
that $\phi^{*}:C^{\infty}(A)\rightarrow C^{\infty}(B)$ respects the
Poisson brackets. 

We denote by $\poisson$ the category of Poisson manifolds and Poisson
maps. This category is monoidal (see \cite{MacLane1998} for a general
reference on monoidal categories). Namely, the tensor product of two
Poisson manifolds, denoted by $A\otimes B$, is defined by endowing
the cartesian product $A\times B$ of the manifold with the Poisson
bivector field $\Pi_{A}\oplus\Pi_{B}$ given by the sum of the Poisson
bivector fields. On Poisson maps, the tensor product is simply the
usual cartesian product of maps. The unit object of the category is
the one-point manifold endowed with its unique (zero) Poisson structure.
$\poisson$ is in fact a symmetric monoidal category, whose symmetries
$A\times B\rightarrow B\times A$ are the usual factor permutations
$\epsilon_{A,B}(a,b)=(b,a)$. 

A \textbf{coisotropic submanifold} $C$ of a Poisson manifold $(A,\Pi)$
is a submanifold satisfying the condition $\Pi^{\sharp}(N^{*}(C))\subset TC$,
where $\Pi^{\sharp}:\Cot A\rightarrow TA$ is the vector bundle map
$\Pi^{\sharp}(\nu)=\Pi(\nu,\,-\,)$ and $N^{*}(C)\subset\Cot A$ is
the conormal bundle of $C$. Coisotropic submanifolds play a role
similar to that of lagrangian submanifolds in symplectic geometry:
Namely, a smooth map $\phi:B\rightarrow A$ is Poisson if and only
if its graph, denoted by $\graph\phi$, is a coisotropic submanifold
of $A\otimes\overline{B}$, where $\overline{B}$ denotes the \textbf{dual
Poisson manifold} of $(B,\Pi)$, which is the manifold $B$ endowed
with the opposite Poisson structure $-\Pi$ .

There is a notion of integration for Poisson manifolds by symplectic
groupoids that generalizes the integration of Lie algebras by Lie
groups. A \textbf{symplectic groupoid} is a Lie groupoid $G\rightrightarrows A$
(see \cite{mackenzie1987} for a general reference on Lie groupoids)
whose total space $G$ is a symplectic manifold and such that the
graph of the groupoid product $m:G\times_{A}G\longrightarrow G$ is
a lagrangian submanifold of $\overline{G}\times\overline{G}\times G$.
A symplectic groupoid endows its unit space $A$ with a unique Poisson
structure such that the target map $s:G\rightarrow A$ (resp. the
source map $t:G\rightarrow A$) is Poisson (resp. anti-Poisson). We
say that $G\rightrightarrows A$ \textbf{integrates} the Poisson manifold
$(A,\Pi)$. A morphism of symplectic groupoids is a morphism of groupoids
that is a symplectomorphism. 

Let us write $\SymplGpd$ for the category of symplectic groupoids
and symplectic groupoid morphisms (i.e. symplectomorphisms that also
are groupoid morphisms). As in the category of Poisson manifolds,
the cartesian product of manifolds endows the category of symplectic
groupoids with a symmetric monoidal structure. One also defines the
\textbf{dual symplectic groupoid} $\overline{G\rightrightarrows A}$
of a symplectic groupoid $(G,\omega)\rightrightarrows A$ as the the
symplectic groupoid $(G,-\omega)\rightrightarrows A$ (which can also
be obtained by interchanging the source and target while {}``reversing''
the product law). If the symplectic groupoid integrates $A$, its
dual integrates $\overline{A}$. Similarly, if the symplectic groupoid
$G\rightrightarrows A$ integrates the Poisson manifold $(A,\Pi_{A})$
and the symplectic groupoid $H\rightrightarrows B$ integrates the
Poisson manifold $(B,\Pi_{B})$, then the symplectic groupoid product
$G\times H\rightrightarrows A\times B$ integrates the Poisson manifold
product $(A\times B,\Pi_{A}\oplus\Pi_{B})$. 

Not all Poisson manifolds can be integrated by symplectic groupoids.
The ones which can are called \textbf{integrable}. All the source
$1$-connected symplectic groupoids integrating an integrable Poisson
manifold are isomorphic to the one resulting from the following construction
in terms of homotopy classes of paths \cite{CF2001,CF2004}. Let $(A,\Pi)$
be an integrable Poisson manifold. Consider the space $\mathcal{P}(\Cot A)$
of cotangent paths; that is, the set of paths $g:[0,1]\rightarrow\Cot A$
such that
\[
p(t)=\Pi^{\sharp}(x(t))\dot{x}(t),\,\textrm{where }g(t)=(p(t),x(t)),
\]
where $x:[0,1]\rightarrow A$ is the base map. The source $1$-connected
symplectic groupoid of $(A,\Pi)$ can be realized as the quotient
of the set of cotangent paths by a homotopy relation $\thicksim$
that fixes the end-points of the $x$-component of the cotangent path
(see \cite{CF2001,CF2004} for a definition of this relation). We
denote by $\Sigma(A)$ this quotient and by $[g]$ the homotopy class
of $g$. $\Sigma(A)$ is a symplectic groupoid over $A$ whose source
and target maps $s,t:\Sigma(A)\rightrightarrows A$ are given by the
endpoints of the path projection on the base: $s([g])=x(0)$ and $t([g])=x(1)$.
The groupoid product is given by concatenation of paths $[g][g']=[gg']$,
where $g\in[g]$ and $g'\in[g']$ are two representatives whose ends
$t([g])=s([g'])$ match smoothly and where 
\[
(gg')(t)=\left\{ \begin{array}{cc}
2g(2t), & 0\leq t\leq\frac{1}{2},\\
2g'(2t-1), & \frac{1}{2}<t\leq1.
\end{array}\right.
\]
From now on, we will reserve the notation $\Sigma(A)\rightrightarrows A$
for the source $1$-connected symplectic groupoid integrating $(A,\Pi)$
coming from the construction above. 

Note that $\Sigma(A)$ always exists but not necessarily as a manifold:
for non-integrable Poisson manifolds, $\Sigma(A)$ can be realized
as a stack (see \cite{TZ2005})

\subsection{Symplectic categories}

Let us fix here a coherent notation for the various categories we
will be dealing with. First of all, we have the following:
\begin{itemize}
\item $\Sympl$ is the usual symplectic category of symplectic manifolds
and symplectomorphisms,
\item $\ExtSympl$ is the extended symplectic ``category'', where symplectomorphisms
are replaced by canonical relations.
\end{itemize}
While the former is an honest category, the latter is not: the composition
of two canonical relations may fail to produce again a canonical relation.
There are two honest categories obtained by considering symplectic
\textit{microfolds} instead of symplectic \textit{manifolds} (see
\cite{CDW2010,CDW2011} for more details)\textit{. }

A \textbf{symplectic microfold} $[M,A]$ is a germ of a symplectic
manifold $M$ around a lagrangian submanifold $A\subset M$ called
the \textbf{core} of the microfold. 

A \textbf{symplectic micromorphism} $([V],\phi):[M,A]\rightarrow[N,B]$
between two symplectic microfolds is a germ $[V,\graph\phi]$ of a
canonical relation $V\subset\overline{M}\times N$ around the graph
of a smooth map $\phi:B\rightarrow A$, that intersects the core $A\times B$
\textit{cleanly} in $\graph\phi$ (this definition is not the original
one given in \cite{CDW2010} but an equivalent one as stated in \cite[Thm. 17]{CDW2011}).
We denote by
\begin{itemize}
\item $\MicroSympl$ the category of symplectic microfolds and germs of
symplectomorphisms,
\item $\ExtMicroSympl$ the category of the symplectic microfolds and symplectic
micromorphisms. 
\end{itemize}
We call this last category the \textbf{microsymplectic category. }

Each of the categories above is symmetric monoidal. The symmetric
monoidal structure for $\Sympl$ and $\MicroSympl$ comes from the
usual cartesian product of sets and maps, while the symmetries $\epsilon_{M,N}$
are given by the usual factor permutations $M\times N\rightarrow N\times M$. 

As for $\ExtSympl$, the symmetric monoidal structure comes from the
one on the category of sets and \textit{binary relations}: The tensor
product on objects is still given by the cartesian product of the
underlying sets, while the tensor product between a canonical relation
$V$ from $M$ to $N$ and a canonical relation from $P$ to $Q$
is the canonical relation
\[
V\otimes W:=(\id_{M}\times\epsilon_{N,P}\times\id_{Q})(V\times W)
\]
from $M\times P$ to $N\times Q$; i.e., the canonical relation $V\times W$
with the middle factors permuted. The symmetries are given by the
graphs of the factor permutations
\[
\graph\epsilon_{M,N}:M\times N\rightarrow N\times M.
\]
The unit object $E$ is the one-point symplectic manifold, which we
will also denote by $\{\star\}$. 

We obtain the symmetric monoidal structure in the microsymplectic
category by going to the representatives. The tensor product of two
symplectic microfolds is given by the usual cartesian product of their
underlying representatives:
\[
[M,A]\otimes[N,B]:=[M\times N,A\times B].
\]
The tensor product of two symplectic micromorphisms is simply 
\[
([V],\phi)\otimes([W],\psi):=([V\otimes W],\phi\times\psi),
\]
where $V\otimes W$ is the tensor product of the canonical relation
representatives as above. Again, the symmetries are given by the graphs
of the factor permutation maps. For the unit object $E$, we can take
the symplectic microfold associated with the cotangent bundle of the
one-point manifold $\Cot\{\star\}\simeq\{0\}\times\{\star\}$. Observe
that, in the microsymplectic category, there is only one morphism
from the unit object to any given symplectic microfold:
\[
e_{M}:=([\{\star\}\times A],\, pr_{A}):\; E\rightarrow[M,A],
\]
where $pr_{A}$ is the unique map from $A$ to the one-point manifold.

\subsection{Categories of monoids\label{sub:Categories-of-monoids}}

Each of the symplectic categories considered in the previous paragraph
is symmetric monoidal, and thus possesses an associated category of
monoids. 

Recall that a \textbf{monoid object} (or monoid for short) in a monoidal
category $(\mathcal{C},\otimes,E)$ with neutral object $\neutral$
is a triple $(C,\mu,\neutralmorph)$ consisting of an object $C$,
a morphism $\mu:C\otimes C\rightarrow C$, called the product, and
a morphism $E:\neutral\rightarrow C$, called the unit. These morphisms
satisfy the associativity and unitality axioms: i.e., respectively,
\begin{gather}
	\mu\circ(\mu\otimes\id)=\mu\circ(\id\otimes\mu)\label{ass:ass-1}\\
	\mu\circ(e\otimes\id)=\mu\circ(\id\otimes\, e)=\id.\label{ass:neut-1}
\end{gather}
A \textbf{monoid morphism} from a monoid object $(C,\mu_{c},e_{c})$
to a monoid object $(D,\mu_{D},e_{D})$ is a morphism $T:C\rightarrow D$
that respects the products and the units: i.e., 
\begin{gather}
	T\circ\mu_{C}=\mu_{D}\circ(T\otimes T),\label{morph:ass-1}\\
	T\circ e_{C}=e_{D}\circ T.\label{morph:neut-1}
\end{gather}
The monoid objects in $\mathcal{C}$ together with the monoid morphisms
form a category $\Mon{\mathcal{C}}$. In the sequel, we will be particularly
interested in $\Mon{\ExtSympl}$, which is only a ``category'', and
its micro-version $\Mon{\ExtMicroSympl}$, which is an honest category.

The category of monoids in a symmetric monoidal category $(\mathcal{C},\otimes,\epsilon,E)$
is again monoidal symmetric. The tensor product on objects is given
by
\[
(C,\mu_{C},e_{C})\otimes(D,\mu_{D},e_{D}):=(C\otimes D,\mu_{C\otimes D},e_{C}\otimes e_{D}),
\]
where
\[
\mu_{C\otimes D}:=(\mu_{C}\otimes\mu_{D})\circ(\id_{C}\otimes\epsilon_{C,D}\otimes\id_{D}),
\]
while the tensor product on monoid morphisms is the tensor product
of the underlying morphisms in $\mathcal{C}$. Since a monoidal category
comes with an isomorphism $E\otimes E\rightarrow E$, the unit object
in $\mathcal{C}$ is also the unit object in its category of monoids.

\subsection{Functorial quantization}

Symplectic groupoids were first introduced as a potential tool to
be used along with geometric quantization in order to construct star
products for Poisson manifolds (\cite{CDW1987,karasev1987,KM1993,Landsman1993,weinstein1991,zakrzewski1990I,zakrzewski1990II}).
Namely, geometric quantization establishes a dictionary between symplectic
geometry and linear algebra%
\footnote{Actually, $Q(L)$ is a set of vectors in $Q(M)$ unless $L$ is ``enhanced''
by a half density. For simplicity, we leave aside this aspect of the
discussion, and we refer the reader to \cite{weinstein1996} for a
full account. %
} 
\[
\begin{array}{ccc}
\textrm{symplectic manifold}\quad M & \rightsquigarrow & \textrm{vector space}\quad Q(M)\\
\textrm{lagrangian submanifold}\quad L\subset M & \rightsquigarrow & \textrm{vector}\quad Q(L)\in Q(M)\\
\textrm{opposite}\quad\overline{M} & \rightsquigarrow & \textrm{dual}\quad Q(M)^{*}\\
\textrm{cartesian product}\quad M\times N & \rightsquigarrow & \textrm{tensor product}\quad Q(M)\otimes Q(N)
\end{array}
\]
implying the additional entry
\begin{eqnarray*}
\textrm{canonical relation}\quad L\subset\overline{M}\times N & \rightsquigarrow & \textrm{linear map}\quad Q(L):Q(M)\rightarrow Q(N).
\end{eqnarray*}
This suggests the possibility of a monoidal ``functor''
\begin{eqnarray*}
Q:\ExtSympl & \rightarrow & \vect_{\C},
\end{eqnarray*}
(the quotation marks remind us that the source of this ``functor''
is only a ``category''), which would induce a ``functor'' between
the corresponding categories of monoids 
\begin{eqnarray*}
Q:\Mon{\ExtSympl} & \overset{Q}{\longrightarrow} & {\bf Alg}_{\C},
\end{eqnarray*}
where ${\bf Alg}_{\C}$ is the category of monoids in $\vect_{\C}$,
or, in other words, the category of unital associative algebras and
unital algebra maps over $\C$. 

Now, the product graph $\graph m\subset\overline{G}\times\overline{G}\times G$
of a symplectic groupoid $G\rightrightarrows A$ together with its
unit space $A$ (regarded as a canonical relation from the one-point
symplectic manifold to $G$) produces a monoid object $(G,\graph m,A)$
in the symplectic ``category''. The image of this monoid object by
$Q$ should yield a unital associative algebra $\big(Q(G),Q(\graph m),Q(A)\big)$,
which is to be thought of as the quantum algebra that quantizes the
Poisson manifold $(A,\Pi)$ induced by the symplectic groupoid. 

The strategy above has been carried out successfully in some concrete
examples and special classes of integrable Poisson manifolds (\cite{Bieliavsky2002,BDS2009,Hawkins2008,GBJV1995,RT2008,weinstein1991,weinstein1994,weinstein1997}).
However, with this approach (as opposed to deformation quantization
for instance), non integrable Poisson manifolds are discarded from
the start, and functoriality issues, due to the ill-behaved composition
of canonical relations, have to be faced. As we shall see in the coming
sections, the use of the microsymplectic category improves the situation
in these respects. Namely, any monoidal functor 
\begin{eqnarray}
\ExtMicroSympl & \overset{Q}{\longrightarrow} & \vect_{\C}\label{Functor: quantization functor}
\end{eqnarray}
respecting duals and tensor products, would yield a general quantization
scheme%
\footnote{In reality, this strategy is too naive. One should first build an
intermediate category that has the same objects as the microsymplectic
category but whose morphisms are enhanced by half densities on the
lagrangian germs of the symplectic micromorphisms (see \cite{weinstein1996}). %
} for Poisson manifolds. The question of the functoriality of these
quantization schemes is closely related to the possibility of enlarging
the category of local symplectic groupoids into a category allowing
us to regard the integration of Poisson manifolds by local symplectic
groupoids as a functor. In fact, these ideas point to a local version
of the early work of Zakrzewski \cite{zakrzewski1990I,zakrzewski1990II}
and the more recent symplectization ``functor'' studied by Fernandes
in \cite{fernandes2006} in the global case. 

Let us conclude this paragraph by mentioning yet another way to circumvent
the functorial issues raised by the ill-defined composition of canonical
relations. Instead of the microsymplectic category, one could use
the WW-category introduced by Wehrheim and Woodward in \cite{WW2010},
which contains and enlarges the symplectic ``category'' so as to obtain
an honest category. A symplectic groupoid can be then identified with
a monoid object (endowed with some additional structures such
as a $*$-operation as in Zakrzewski \cite{zakrzewski1990I,zakrzewski1990II})
in this category.

\section{From Poisson manifolds to symplectic microgroupoids}

\subsection{Construction of $\Sigma$\label{sub: path construction}}

The extended symplectic ``category'' can be used as a target category
to ``symplectify'' Poisson geometry via the symplectization ``functor''
as studied by Fernandes in \cite{fernandes2006}. The general idea
is to replace an integrable Poisson manifold $(A,\Pi)$ by its source
1-connected symplectic groupoid $\Sigma(A)\rightrightarrows A$ as
described in the previous section. A Poisson map $\phi:(B,\Pi_{B})\rightarrow(A,\Pi_{A})$
is then replaced by the lagrangian subgroupoid $\Sigma(\phi)\subset\overline{\Sigma(A)}\times\Sigma(B)$
that integrates the graph of $\phi$ regarded as a coisotropic submanifold
of $\overline{\Pi_{B}}\times\Pi_{A}$ . This yields a ``functor''
\[
\Sigma:\poisson^{{\bf {int}}}\rightarrow\ExtSympl
\]
from the category of \textit{integrable} Poisson manifolds (i.e. the
Poisson manifolds that have a symplectic groupoid integrating them)
to the extended symplectic ``category'': the symplectization functor.

The definition of $\Sigma$ on morphisms rests on the following facts
(see \cite{cattaneo2004}):
\begin{itemize}
\item Let $G\rightrightarrows A$ be a source $1$-connected symplectic
groupoid over the Poisson manifold $A$ and let $L\rightrightarrows C$
be a (immersed) subgroupoid of $G\rightrightarrows A$, then $C$
is a coisotropic submanifold of $A$; conversely, there is a unique
(immersed) source 1-connected lagrangian subgroupoid that has a given
coisotropic submanifold as its unit space;
\item A smooth map $\phi:B\rightarrow A$ between the Poisson manifolds
$A$ and $B$ is Poisson iff its graph is a coisotropic submanifold
of $A\times\overline{B}$.
\end{itemize}
Therefore, we define the value of $\Sigma$ on a Poisson map $\phi:B\rightarrow A$
to be the unique lagrangian subgroupoid $\Sigma(\phi)\rightrightarrows\graph\phi$
of the symplectic groupoid product $\overline{\Sigma(A)}\times\Sigma(B)\rightrightarrows A\times B$
that integrates the coisotropic submanifold $\graph\phi$ of the Poisson
manifold $\overline{A}\times B$. 

Actually, there is a better suited ``category'' than $\ExtSympl$
as target category for the symplectization ``functor''. Namely, consider
the ``category''
\[
\ExtSymplGpd
\]
 whose objects are the source $1$-connected symplectic groupoids.
A morphism from $G\rightrightarrows A$ to $H\rightrightarrows B$
is a lagrangian subgroupoid $L\rightrightarrows C$ of the symplectic
groupoid product $\overline{G}\times H\rightrightarrows A\times B$.
The composition is given by the composition of the canonical relations
$L$ underlying the morphisms $L\rightrightarrows C$.

\subsection{Micro-version}

The symplectization functor as defined above suffers from two major
problems: it is only a ``functor'' and it is relevant only for the
class of integrable Poison manifolds. In order to define an honest
symplectization functor whose domain is the category of \textit{all}
Poisson manifolds, we need to consider the integration of Poisson
manifolds by \textit{local} symplectic groupoids instead of global
ones. What changes now is that \textit{every} Poisson manifold can
be integrated by a local symplectic groupoid (\cite{CDW1987,karasev1987}),
or more conveniently, by a \textit{symplectic microgroupoid}. 

Recall that a local groupoid $G\rightrightarrows A$ is, roughly,
a groupoid whose structure maps are defined only in a neighborhood
of the unit space (see \cite{CDW1987,mackenzie1987} for more details).
Two local groupoids over $A$ coinciding on a (smaller) neighborhood
of $A$ are equivalent for most purposes. Thus, the relevant information
is really contained in the ``groupoid germ'' induced by the local
groupoids. We are led to the following definition:

\begin{defn}
A (symplectic) microgroupoid $[G,A]\rightrightarrows A$ over $A$
is an equivalence class of local (symplectic) groupoids over $A$.
A (lagrangian) submicrogroupoid $[L,C]\rightrightarrows C$ of a (symplectic)
microgroupoid $[G,A]\rightrightarrows A$ is a (lagrangian) submicrofold
$[L,C]$ in $[G,A]$ such that it is also a submicrogroupoids over
$C$. 
\end{defn}

Now, every Poisson manifold $(A,\Pi)$ has a symplectic microgroupoid
integrating it (e.g. the one constructed by Karasev in \cite{karasev1987}),
which we will denote by
\[
[\Sigma(A),A]\rightrightarrows A,
\]
and which gives the value of the micro version of the symplectization
functor
\[
\Sigma:\poisson\rightarrow\ExtMicroSympl
\]
on objects. As for the morphism component of the functor, the local
lagrangian groupoid $\Sigma(\phi)\rightrightarrows\graph\phi$ integrating
the graph of a Poisson map $\phi:B\rightarrow A$ (as a coisotropic
manifold) yields a germ of a canonical relation $[\Sigma(\phi),\graph\phi]$
from the symplectic microfold $[\Sigma(A),A]$ to the symplectic microfold
$[\Sigma(B),B]$. Note that $[\Sigma(\phi),\graph\phi]$ also defines
a lagrangian submicrogroupoid over $\graph\phi$, which is unique.
To show that $\Sigma$ is well-defined, we still need to check that
$[\Sigma(\phi),\graph\phi]$ is a symplectic micromorphism and that
$\Sigma$ is functorial. 
\begin{prop}
\label{pro: lag.micgpds = sympl micmorph.}Let $[G,A]\rightrightarrows A$
and $[H,B]\rightrightarrows B$ be two symplectic microgroupoids.
Any lagrangian submicrogroupoid $[L,\graph\phi]\rightrightarrows\graph\phi$
of the symplectic microgroupoid product, and where $\phi:B\rightarrow A$
is a smooth map, yields the symplectic micromorphism $([L],\phi):[G,A]\rightarrow[H,B].$ 

Moreover, if we have another symplectic microgroupoid $[I,C]\rightrightarrows C$
and a lagrangian submicrogroupoid $([K],\psi):[H,B]\rightarrow[I,C]$,
the composition of the corresponding symplectic micromorphisms yields
a lagrangian submicrogroupoid
\[
[K\circ L,\graph\phi\circ\psi]\rightrightarrows\graph\phi\circ\psi.
\]
\end{prop}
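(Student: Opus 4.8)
The plan is to establish the proposition in two parts, corresponding to its two claims: first, that a lagrangian submicrogroupoid over the graph of a smooth map is automatically a symplectic micromorphism, and second, that the composition of two such micromorphisms again arises as a lagrangian submicrogroupoid. Both parts hinge on exploiting the groupoid structure to control the geometry of the underlying canonical relations near their cores.

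For the first claim, I would begin by unwinding the definition of a symplectic micromorphism as recalled in the preliminaries: I must show that the germ $[L,\graph\phi]$ is a germ of a canonical relation in $\overline{G}\times H$ whose base map is $\phi\colon B\to A$, and, crucially, that $L$ intersects the core $A\times B$ cleanly precisely along $\graph\phi$. That $L\subset\overline{G}\times H$ is lagrangian is given. The key structural input is that $L\rightrightarrows\graph\phi$ is a \emph{submicrogroupoid}, so its unit space is exactly $\graph\phi$, and the source and target maps of the ambient product microgroupoid $\overline{G}\times H\rightrightarrows A\times B$ restrict to $L$. I would argue that the unit space of any lagrangian subgroupoid is its intersection with the unit space of the ambient groupoid, and that this intersection is \emph{clean} as a consequence of the compatibility of the groupoid structure maps (in particular, the fact that the inclusion of units is a clean embedding and that $L$ is invariant under the groupoid operations). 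The remaining point is to identify the base map: the unit space $\graph\phi\subset A\times B$ is the graph of $\phi$, and clean intersection along a graph is exactly the condition guaranteeing that $[L]$ descends to a symplectic micromorphism with base $\phi$, via the characterization quoted from \cite[Thm.~17]{CDW2011}.

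For the second claim, I would first recall that the composition of symplectic micromorphisms is always well-defined (this is the content of $\ExtMicroSympl$ being an honest category), so $[K\circ L]$ exists as a germ of a canonical relation from $[G,A]$ to $[I,C]$ with base map $\phi\circ\psi$. What must be shown is that this composite germ is again a \emph{lagrangian submicrogroupoid} of $\overline{G}\times I\rightrightarrows A\times C$ over $\graph(\phi\circ\psi)$. The plan is to verify the three groupoid axioms for $K\circ L$ directly. Since $L\rightrightarrows\graph\phi$ and $K\rightrightarrows\graph\psi$ are each closed under the respective product, inverse, and unit maps of their ambient product microgroupoids, I would show that these operations are compatible with the composition of canonical relations: composing two lagrangian relations that are each subgroupoids yields a relation closed under the product of $\overline{G}\times I$. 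Concretely, the groupoid product on $L$ and on $K$ can be assembled into a product on $K\circ L$ because the middle factor $H$ over which the composition is taken carries a matching groupoid structure, and the concatenation/multiplication operations intertwine correctly. The unit space of $K\circ L$ is then $\graph\phi\circ\graph\psi=\graph(\phi\circ\psi)$, which is exactly the base of the composite micromorphism.

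\textbf{The main obstacle} I anticipate is the clean-intersection/transversality bookkeeping underlying the composition. To form $K\circ L$ as a canonical relation one needs the composition of the underlying relations to be transverse (or at least clean) and to again produce a lagrangian; in the micro-setting this is guaranteed in a neighborhood of the cores by the general composition theorem for symplectic micromorphisms, but to conclude that the \emph{groupoid} structure passes to the germ one must check that the fiber product defining the groupoid product of $K\circ L$ is itself clean and that the resulting multiplication germ is smooth near the unit space $\graph(\phi\circ\psi)$. I expect the delicate point to be showing that the groupoid product on $K\circ L$, obtained by simultaneously multiplying in $L$ and in $K$ while respecting the shared middle groupoid $H$, is well-defined as a germ — i.e.\ that the relevant matching conditions (source of one factor equals target of the next, over both $H$ and the outer factors) are compatible. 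Once the cleanliness of these intersections is in hand, the associativity and unitality of the product on $K\circ L$ follow formally from those on $L$ and $K$, so the real work is geometric rather than algebraic, and I would concentrate the argument there.
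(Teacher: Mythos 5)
Your outline tracks the paper's proof quite closely, and your treatment of the second claim is essentially the paper's: once the first claim is in hand, $[K\circ L]$ automatically exists as a symplectic micromorphism with core map $\phi\circ\psi$ because composition in $\ExtMicroSympl$ is always defined, and the only remaining task is to check that the groupoid structure survives. The paper packages your element-wise verification (closure of $K\circ L$ under the product of $\overline{G}\times I$, with the matching over the middle factor $H$ that you describe) as the observation that $[L\times K,\graph\phi\times\graph\psi]$ is a lagrangian submicrogroupoid of $[G\times H\times H\times I,\,A\times B\times B\times C]$ and that all defining properties of a lagrangian submicrogroupoid are preserved under reduction by $G\times\Delta_{H}\times I$; this is the same computation. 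Your ``main obstacle'' paragraph overstates the difficulty here: the clean-intersection bookkeeping for the composite is exactly what the first claim together with the general composition theorem for symplectic micromorphisms already provides, so no further transversality argument is needed.

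The one genuine gap is in the first claim, at its single nontrivial point: the cleanliness of the intersection $L\cap(A\times B)=\graph\phi$. You assert that this follows from ``the compatibility of the groupoid structure maps'' and the cleanness of the unit embedding, but neither of these yields the required identity of tangent spaces $T_{(a,b)}L\cap T_{(a,b)}(A\times B)=T_{(a,b)}\graph\phi$. The paper's argument is a specific, short device that your proposal is missing: apply the tangent functor. The tangent prolongation $TL\rightrightarrows T\graph\phi$ is again a local groupoid, sitting inside the tangent groupoid of the ambient product, with unit space $T\graph\phi$; so the same set-theoretic observation you used at the level of points --- a subgroupoid meets the ambient unit space exactly in its own unit space --- applied now to $TL$ gives $TL\cap(TA\times TB)=T\graph\phi$, which is precisely the cleanliness condition demanded by the characterization of symplectic micromorphisms in \cite[Thm.~17]{CDW2011}. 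Without an argument of this kind the first claim, and with it the identification of lagrangian submicrogroupoids with symplectic micromorphisms, is not actually established.
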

\begin{proof}
Let $L\rightrightarrows\graph\phi$ be a local groupoid representative
of $[L,\graph\phi]\rightrightarrows\graph\phi$. Since $\graph\phi$
is the space of units of $L$, we have by definition that $L\cap(A\times B)=\graph\phi$.
We need to show that this intersection is clean (\cite[Thm. 17]{CDW2011}).
For this, consider the local tangent groupoid $TL\rightrightarrows T\graph\phi$.
Since $T\graph\phi$ is now the unit space of $TL$, we obtain for
the same reason as before that $TL\cap(TA\cap TB)=T\graph\phi$. 

The fact that $[K\circ L,\graph\phi\circ\psi]$ is a lagrangian microsubgroupoid
over $\graph\phi\circ\psi$ comes from the facts that $[L\times K,\graph\phi\times\graph\psi]$
is a lagrangian submicrogroupoid of 
\[
[G\times H\times H\times I,A\times B\times B\times C]\rightrightarrows A\times B\times B\times C
\]
and that all the defining properties of a lagrangian submicrogroupoid
are preserved by the reduction with respect to $G\times\Delta_{H}\times I$
(which is directly checked by taking representatives). 
\end{proof}
The functoriality of $\Sigma$ now follows from the uniqueness of
the lagrangian submicrogroupoid in
\[
[\overline{\Sigma(A)}\times\Sigma(B),A\times B]\rightrightarrows A\times B
\]
that integrates a given Poisson map $\phi:B\rightarrow A$, and the
fact that, from Proposition \ref{pro: lag.micgpds = sympl micmorph.},
both lagrangian submicrogroupoids 
\[
[\Sigma(\phi_{1}\circ\phi_{2}),\,\graph\phi_{1}\circ\phi_{2}]\rightrightarrows\graph\phi_{1}\circ\phi_{2}\leftleftarrows[\Sigma(\phi_{2})\circ\Sigma(\phi_{1}),\graph\phi_{1}\circ\phi_{2}]
\]
integrate the Poisson map $\phi_{1}\circ\phi_{2}$.

\subsection{Changing the target category}

Thanks to Proposition \ref{pro: lag.micgpds = sympl micmorph.},
we can consider the category
\[
\ExtMicroSymplGpd
\]
that has the symplectic microgroupoids as its objects and whose morphisms
from $[G,A]\rightrightarrows A$ to $[H,B]\rightrightarrows B$ are
the symplectic micromorphisms $([L],\phi)$ from $[G,A]$ to $[H,B]$
whose underlying lagrangian submicrofold are lagrangian submicrogroupoids
$[L,\graph\phi]\rightrightarrows\graph\phi$ of the symplectic microgroupoid
product. 

As in the macro case, this category can again be taken as the target
category of the symplectization functor. Namely, by Proposition \ref{pro: lag.micgpds = sympl micmorph.},
\[
\big(\big[\Sigma(\phi_{1})\circ\Sigma(\phi_{2})\big],\phi_{1}\circ\phi_{2}\big)
\]
is also a lagrangian subgroupoid that integrates the composition
of the Poisson maps $\phi_{1}$ and $\phi_{2}$. Now, in the micro
case, a nice thing happens:

\begin{thm}
The functor $\Sigma:\poisson\rightarrow\ExtMicroSymplGpd$ is an equivalence
of symmetric monoidal categories. 
\end{thm}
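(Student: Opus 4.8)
The plan is to verify that $\Sigma$ is fully faithful and essentially surjective and that it is a strong symmetric monoidal functor; by the coherence theorem for monoidal functors (see \cite{MacLane1998}) these two facts together promote $\Sigma$ to an equivalence of symmetric monoidal categories, its quasi-inverse inheriting a canonical strong monoidal structure. A preliminary observation is that $\ExtMicroSymplGpd$ does carry the symmetric monoidal structure of $\ExtMicroSympl$: products of symplectic microgroupoids are symplectic microgroupoids, the tensor product of two lagrangian submicrogroupoid morphisms is again one (the middle-factor permutation being a groupoid isomorphism), and the symmetries, being graphs of factor permutations, are microgroupoid isomorphisms.

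For faithfulness I would observe that the base map of $\Sigma(\phi)$ is $\phi$ itself, since by construction $\graph\phi$ is the unit space of the lagrangian submicrogroupoid $[\Sigma(\phi),\graph\phi]$; thus $\phi$ is recovered from $\Sigma(\phi)$. For fullness I would take an arbitrary morphism $([L],\phi)\colon[\Sigma(A),A]\to[\Sigma(B),B]$, namely a lagrangian submicrogroupoid $[L,\graph\phi]\rightrightarrows\graph\phi$ of the product microgroupoid over $A\times B$: by the first fact recalled in Section \ref{sub: path construction} its unit space $\graph\phi$ is coisotropic in the base Poisson manifold $\overline{A}\times B$ of the product, by the second fact this is equivalent to $\phi$ being Poisson, and the uniqueness clause of the first fact then forces $[L]=[\Sigma(\phi)]$. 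Hence $\phi\mapsto\Sigma(\phi)$ is a bijection onto the morphisms $[\Sigma(A),A]\to[\Sigma(B),B]$. For essential surjectivity I would use that any symplectic microgroupoid $[G,A]\rightrightarrows A$ endows $A$ with a Poisson structure and is automatically source-simply-connected, its source fibers being germs around the units; by uniqueness of the source-simply-connected integration it is isomorphic to $[\Sigma(A),A]$ through the graph of a microgroupoid isomorphism, which is invertible in $\ExtMicroSymplGpd$.

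For the monoidal structure the key computation is that the cotangent-path realization of $\Sigma$ factors over products: a cotangent path for $\Pi_A\oplus\Pi_B$ is exactly a pair of cotangent paths for $\Pi_A$ and $\Pi_B$ and the homotopy relation factors accordingly, so that $\Sigma(A\otimes B)$ coincides with $\Sigma(A)\otimes\Sigma(B)$, while $\Sigma(\{\star\})=E$ because $\Cot\{\star\}$ is a point. This renders the comparison morphisms $\Sigma(A)\otimes\Sigma(B)\to\Sigma(A\otimes B)$ and $E\to\Sigma(\{\star\})$ canonical and the associativity and unit coherence diagrams automatic; compatibility on morphisms, $\Sigma(\phi\otimes\psi)=\Sigma(\phi)\otimes\Sigma(\psi)$, again follows from uniqueness of the lagrangian submicrogroupoid integrating $\graph(\phi\times\psi)$. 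Compatibility with the symmetries is obtained by applying $\Sigma$ to the Poisson flip $\epsilon_{A,B}$, which functoriality carries to an isomorphism $\Sigma(A)\times\Sigma(B)\to\Sigma(B)\times\Sigma(A)$ identified by the path construction with the factor permutation, i.e. the symmetry of $\ExtMicroSymplGpd$.

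I expect the main obstacle to lie not in the monoidal bookkeeping, which is essentially formal once the path construction is seen to factor, but in the two uniqueness inputs that drive fullness and essential surjectivity: the uniqueness, up to microisomorphism, of the source-simply-connected symplectic microgroupoid integrating a Poisson manifold and of the lagrangian submicrogroupoid integrating a coisotropic submanifold. These are classical in the macro, integrable setting of \cite{CF2001,CF2004,cattaneo2004} and must be transported to the level of germs; what makes the micro-version hold uniformly over \emph{all} Poisson manifolds is precisely that the source fibers of a microgroupoid are germs around the unit space and hence automatically simply connected, which removes the integrability and connectivity hypotheses obstructing the macro statement.
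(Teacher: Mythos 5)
Your argument follows essentially the same route as the paper's: full faithfulness from the bijection between Poisson maps $\phi:B\to A$ and lagrangian submicrogroupoids of $[\overline{\Sigma(A)}\times\Sigma(B),A\times B]\rightrightarrows A\times B$, essential surjectivity from the uniqueness up to microgroupoid isomorphism germ of the symplectic microgroupoid integrating a given Poisson manifold (realized as the graph of that isomorphism), and the symmetric monoidal structure from the natural isomorphisms $\Sigma(A\otimes B)\simeq\Sigma(A)\otimes\Sigma(B)$ and $\Sigma(\phi_1\times\phi_2)\simeq\Sigma(\phi_1)\times\Sigma(\phi_2)$. The only cosmetic difference is that you ground the product isomorphism in the cotangent-path model, which literally applies only to integrable Poisson manifolds, whereas at the germ level it is better derived from the same uniqueness of integrating microgroupoids; this does not affect the substance of the proof.
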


\begin{proof}
Since there is a one-to-one correspondence between the lagrangian
submicrogroupoids of the form
\begin{eqnarray*}
[L,\graph\phi]\rightrightarrows\graph\phi & \subset & [\overline{\Sigma(A)}\times\Sigma(B),A\times B]\rightrightarrows A\times B
\end{eqnarray*}
and the Poisson maps $\phi:B\rightarrow A$, we have that $\Sigma$
is full and faithful. The essential surjectivity of $\Sigma$ on the
objects follows from the fact that two symplectic microgroupoids that
integrate the same Poisson manifold are related by a germ of symplectic
microgroupoid isomorphism. In particular, for any symplectic microgroupoid
$[G,A]\rightrightarrows A$, there is a symplectic microgroupoid isomorphism
germ $[\Psi]:[G,A]\rightarrow[\Sigma(A),A]$, whose restriction to
the core is the identity. Now, one checks that $[\graph\Psi,\id_{A}]$
is a lagrangian submicrogroupoid over $\graph\id_{A}$. The monoidality
and symmetry properties of the functor come from the natural isomorphisms
$\Sigma(A\times B)\simeq\Sigma(A)\times\Sigma(B)$ and $\Sigma(\phi_{1}\times\phi_{2})\simeq\Sigma(\phi_{1})\times\Sigma(\phi_{2})$.
\end{proof}

\section{From symplectic microgroupoids to monoids}

Given a symplectic groupoid $G\rightrightarrows A$, the graph of
its product together with its space of units yield canonical relations
$\graph m:G\otimes G\rightarrow G$ and $e_{A}:\{\star\}\rightarrow G$,
where $e_{A}=\{\star\}\times A$. As noted in \cite{weinstein1996,CDW1987,zakrzewski1990II},
the triple
\[
M(G):=(G,\graph m,e_{A})
\]
 can be interpreted as a monoid object in the extended symplectic
``category''. In \cite{zakrzewski1990II} Zakrzewski constructed a
``functor''
\[
Z:\poisson^{{\bf int}}\rightarrow\Mon{\ExtSympl}
\]
directly from the category of integrable Poisson manifolds and complete
Poisson maps to the ``category'' of monoids in $\ExtSympl$ (although
using a quite different language). At the level of objects, this ``functor''
takes an integrable Poisson manifold $(A,\Pi)$ to the monoid object
$M(\Sigma(A))$ associated with its source $1$-connected symplectic
groupoid $\Sigma(A)\rightrightarrows A$. At the level of morphisms,
Zakrzewski obtained the canonical relation corresponding to the monoid
morphism directly out of the complete Poisson map by a flow construction. 

In this section, we give a micro-version of $Z$, which is an honest
functor and whose domain is the category of \textit{all} Poisson manifolds
and \textit{all} Poisson maps. We obtain $Z$ as factored through
the symplectization functor
\[
\poisson\overset{\Sigma}{\longrightarrow}\ExtMicroSymplGpd\overset{\mon}{\longrightarrow}\Mon{\ExtMicroSympl}.
\]
We show that it yields an equivalence of symmetric monoidal categories
between the category of Poisson manifolds and Poisson maps and the
category of monoids and monoid morphisms in the microsymplectic category
(Theorem \ref{thm: main}).

\subsection{Construction of $\mon$}

The association $\mon$ that takes a symplectic groupoid $G\rightrightarrows A$
to the corresponding monoid object $\mon(G)$ in the extended symplectic
``category'' has its corresponding version in the micro-world.

\begin{prop}
The graph of the product in a symplectic microgroupoid $[G,A]\rightrightarrows A$
is a symplectic micromorphism
\[
([\graph m],\Delta_{A}):[G,A]\otimes[G,A]\rightarrow[G,A],
\]
where $\Delta_{A}:A\rightarrow A\times A$ is the diagonal map. We
will simply denote it by $\graph[m]$. Moreover,
\[
\mon([G,A]):=([G,A],\graph[m],e_{A}),
\]
where $e_{A}=(\{\star\}\times A,pr_{A})$ is the unique symplectic
micromorphism from the unit object $E=\Cot\{\star\}$ to $[G,A]$,
is monoid object in the microsymplectic category.
\end{prop}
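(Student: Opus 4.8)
The plan is to verify that $([G,A],\graph[m],e_A)$ satisfies the monoid axioms \eqref{ass:ass-1} and \eqref{ass:neut-1} by working at the level of local groupoid representatives and then passing to germs. First I would establish the claim that $([\graph m],\Delta_A)$ is a genuine symplectic micromorphism. This is precisely an instance of Proposition \ref{pro: lag.micgpds = sympl micmorph.}: the graph of the product $m$ is a lagrangian submicrogroupoid of the product microgroupoid $[G\times G\times G, A\times A\times A]\rightrightarrows A\times A\times A$, sitting over $\graph\Delta_A \subset A\times A \times A$ (using the identification $\overline{M}\times\overline{M}\times M$ on the symplectic side and keeping track of the factor ordering). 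The defining map on cores is the diagonal $\Delta_A$ because the units of a groupoid multiply as $1_a\cdot 1_a = 1_a$, so the restriction of $\graph m$ to the core $A\times A\times A$ is exactly $\graph\Delta_A$. Invoking Proposition \ref{pro: lag.micgpds = sympl micmorph.} then yields both that $[\graph m,\graph\Delta_A]$ is a symplectic micromorphism and that its compositions remain lagrangian submicrogroupoids, which is what makes the axioms even typecheck in $\ExtMicroSympl$.

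Next I would check associativity \eqref{ass:ass-1}. On representatives, $\graph[m]\circ(\graph[m]\otimes\id)$ is the canonical relation germ underlying the composite of the two local groupoid products applied in the order $m\circ(m\times\id)$, while $\graph[m]\circ(\id\otimes\graph[m])$ underlies $m\circ(\id\times m)$. Associativity of the local groupoid multiplication, $m(m(g_1,g_2),g_3)=m(g_1,m(g_2,g_3))$ wherever both sides are defined, gives the equality of the two composite relations as honest canonical relations on a neighborhood of the cores; passing to germs gives equality in $\ExtMicroSympl$. The one point needing care is that the composition of the relevant canonical relations is clean and transverse in the microlocal sense so that Proposition \ref{pro: lag.micgpds = sympl micmorph.} applies to the composites and they compose to the correct germ; but this is guaranteed exactly by that proposition, since each composite is again the graph of an iterated multiplication, hence a lagrangian submicrogroupoid.

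For unitality \eqref{ass:neut-1}, recall $e_A = (\{\star\}\times A, \pr_A)$ is the canonical relation germ given by the unit embedding $A\hookrightarrow G$. Composing $\graph[m]\circ(e_A\otimes\id)$ corresponds to the partial multiplication $g\mapsto m(1_{s(g)},g)=g$ and similarly $m(g,1_{t(g)})=g$ on the right, which are the defining left/right unit laws of the (local) groupoid. These identities hold on a neighborhood of the core, so both composites equal the identity germ $\id_{[G,A]}$, establishing \eqref{ass:neut-1}. The main obstacle I anticipate is not any of these groupoid identities, which are formal, but rather the bookkeeping needed to confirm that all the tensor-product reorderings and the middle-factor permutations $\epsilon$ built into the monoidal structure of $\ExtMicroSympl$ match the naive factor orderings of $G\times G\times G$; I would handle this by fixing representatives once and for all, writing each structure map as an explicit relation germ with the factors labelled, and checking the permutations track correctly, exactly as in the reduction argument at the end of the proof of Proposition \ref{pro: lag.micgpds = sympl micmorph.}.
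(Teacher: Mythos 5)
Your treatment of the monoid axioms themselves (associativity and unitality read off from the local groupoid axioms on a representative, then passed to germs) agrees with the paper, which disposes of that part in one sentence. The genuine problem is your first step. You assert that $[\graph m,\graph\Delta_{A}]$ is a lagrangian \emph{submicrogroupoid} of the triple microgroupoid product and then invoke Proposition \ref{pro: lag.micgpds = sympl micmorph.} to conclude it is a symplectic micromorphism. But $\graph m$ is a lagrangian submanifold of $\overline{G}\times\overline{G}\times G$, not a subgroupoid of the product groupoid $G\times G\times G\rightrightarrows A^{3}$. If it were a subgroupoid with unit space $\graph\Delta_{A}=\{(a,a,a)\}$, its source map would have to land in $\graph\Delta_{A}$; yet the source of $(g_{1},g_{2},g_{1}g_{2})$ in the product groupoid is $(s(g_{1}),s(g_{2}),s(g_{1}g_{2}))$, and composability of $g_{1}$ with $g_{2}$ identifies one of the first two entries with a \emph{target}, so this triple lies on the diagonal only for elements with $s=t$. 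Closure under the product would likewise require the interchange identity $(g_{1}g_{2})(h_{1}h_{2})=(g_{1}h_{1})(g_{2}h_{2})$, which the groupoid axioms do not supply. This is exactly why $\graph[m]$ plays the role of the monoid \emph{product} $\mu$ rather than of a morphism in $\ExtMicroSymplGpd$, and why Proposition \ref{pro: lag.micgpds = sympl micmorph.} (which is about morphisms, i.e.\ lagrangian submicrogroupoids of $[\overline{G}\times H,A\times B]$) is not applicable here. Your second paragraph repeats the error when it declares the iterated composites to be lagrangian submicrogroupoids.

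The paper instead verifies the micromorphism condition directly from the characterization in \cite[Thm. 17]{CDW2011}: one checks that $\graph m$ intersects the core $A^{3}$ cleanly in $\graph\Delta_{A}$. The set-theoretic equality $(\graph m)\cap A^{3}=\graph\Delta_{A}$ holds because $m(a,a')=a''$ with $a,a',a''\in A$ forces $a=a'=a''$, and cleanliness follows by running the same argument on the tangent local groupoid $TG\rightrightarrows TA$. If you replace your appeal to Proposition \ref{pro: lag.micgpds = sympl micmorph.} by this direct check, the remainder of your argument goes through; note also that you need not worry about cleanliness or transversality of the composites appearing in \eqref{ass:ass-1} and \eqref{ass:neut-1}, since composition of symplectic micromorphisms is always well defined in $\ExtMicroSympl$.
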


\begin{proof}
We need to check that $\graph m$ intersects $A^{3}$ cleanly in $\graph\Delta_{A}$.
The fact that $(\graph m)\cap A^{3}=\graph\Delta_{A}$ comes from
the facts that $m(a,a')=a''$ for $a,a',a''\in A$ iff $a=a'=a''$.
The cleanliness of the intersection comes from the repetition of this
argument for the tangent local groupoid $TG\rightrightarrows TA$.
The monoid object axioms follow directly from the local groupoid axioms
for a representative $G\rightrightarrows A$ of the symplectic microgroupoid. 
\end{proof}

The next proposition shows that $M$ is the {}``object'' component
of a functor 
\[
\mon:\ExtMicroSymplGpd\longrightarrow\Mon{\ExtMicroSympl}.
\]

\begin{prop}
\label{pro: lag. gpd = mon. morph.}Let $[G,A]\rightrightarrows A$
and $[H,B]\rightrightarrows B$ be two symplectic microgroupoids.
There is a one-to-one correspondence between the set of monoid morphisms
\begin{eqnarray*}
([L],\phi):\mon([G,A]) & \longrightarrow & \mon([H,B])
\end{eqnarray*}
and the set of lagrangian submicrogroupoids of the form
\begin{eqnarray*}
\Big(([L,\graph\phi]\rightrightarrows\graph\phi\Big) & \subset & \Big([\overline{G}\times H,A\times B]\rightrightarrows A\times B\Big),
\end{eqnarray*}
where $\phi:B\rightarrow A$ is a smooth map. 
\end{prop}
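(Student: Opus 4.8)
The plan is to reduce everything to a single equivalence: for the canonical-relation germ $[L]$ that underlies both sides, the multiplicativity axiom \eqref{morph:ass-1} for $([L],\phi)$ is equivalent to $[L,\graph\phi]\rightrightarrows\graph\phi$ being a lagrangian submicrogroupoid, while the unit axiom \eqref{morph:neut-1} carries no information. Since both assignments will leave the germ $[L]$ untouched and merely reinterpret it, the asserted bijection follows once the two axiom systems are shown to coincide.

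First I would dispose of the unit axiom. By Proposition \ref{pro: lag.micgpds = sympl micmorph.} any lagrangian submicrogroupoid of $[\overline G\times H,A\times B]$ already determines a symplectic micromorphism $([L],\phi):[G,A]\to[H,B]$, so on both sides the only question is whether this micromorphism is multiplicative and unital. But in the microsymplectic category there is a unique morphism from the unit object $E$ to any microfold; hence $([L],\phi)\circ e_A$ and $e_B$ are automatically equal and \eqref{morph:neut-1} holds for free. I would also record that, by the composition statement of Proposition \ref{pro: lag.micgpds = sympl micmorph.}, all the composites appearing in \eqref{morph:ass-1} are themselves lagrangian submicrogroupoids, so every term is a bona fide symplectic micromorphism and no ill-defined composition arises.

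The heart of the proof is to unwind \eqref{morph:ass-1} at the level of set-germs. Writing $[L]\circ\graph[m_G]=\graph[m_H]\circ([L]\otimes[L])$ as an equality of canonical relations from $[G,A]\otimes[G,A]$ to $[H,B]$, the left-hand relation sends $(g_1,g_2)$ to those $h$ with $(g_1g_2,h)\in L$, while the right-hand relation sends $(g_1,g_2)$ to all products $h_1h_2$ arising from composable $(g_1,h_1),(g_2,h_2)\in L$. The inclusion $\supseteq$ is exactly closure of $L$ under the groupoid product of $\overline G\times H$ (the reversal built into $\overline G$ matches the factor $\overline G\times\overline G$ in $\graph m_G$), and the inclusion $\subseteq$ is the factorization property that every element of $L$ lying over a product decomposes inside $L$. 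For the direction ``submicrogroupoid $\Rightarrow$ monoid morphism'' both inclusions are immediate from $L\rightrightarrows\graph\phi$ being a subgroupoid with structure maps restricted from $\overline G\times H$. For the converse I would use that a monoid morphism is in particular a symplectic micromorphism, so $[L,\graph\phi]$ is a lagrangian submicrofold with core $\graph\phi=L\cap(A\times B)$; then $\supseteq$ makes $L$ closed under the local product, and specializing $\subseteq$ by inserting units $g_2=1_{s(g)}$ forces the source and target of elements of $L$ to land in $\graph\phi$ and supplies the missing inverses.

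I expect this last point to be the main obstacle, and it is precisely where the micro-setting is essential. In the macro-world, multiplicativity of the product relation does not recover the inverse map --- the source of the discrepancy between monoids and symplectic groupoids noted in the introduction --- whereas near the unit space the inverse is determined by the local product together with the unit space: multiplication by a near-identity element is a local diffeomorphism preserving $L$, so the equation $\ell\,x=1_{t(\ell)}$ is solvable with $x\in L$, giving $\ell^{-1}\in L$. Thus, for germs, closure under the local product with the correct unit space promotes $L$ to a genuine local subgroupoid. Once the equivalence of the two axiom systems is in hand, the bijection is immediate, since each construction leaves the underlying lagrangian germ $[L]$ unchanged and the two assignments are therefore mutually inverse.
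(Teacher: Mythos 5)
Your overall route is the same as the paper's: the unit axiom \eqref{morph:neut-1} is discharged by the uniqueness of morphisms out of $E$, the content of the statement is the identification of the multiplicativity axiom \eqref{morph:ass-1} with the subgroupoid property of $[L,\graph\phi]$, and your device of ``inserting units'' into \eqref{morph:ass-1} to show that sources and targets of points of $L$ land in $\graph\phi$ is exactly the Zakrzewski-style argument the paper uses. There is, however, a genuine gap in the direction ``submicrogroupoid $\Rightarrow$ monoid morphism''. You assert that both set-theoretic inclusions underlying \eqref{morph:ass-1} are ``immediate from $L$ being a subgroupoid''. Closure of $L$ under the product of $\overline{G}\times H$ gives only $L(g_{1})L(g_{2})\subseteq L(g_{1}g_{2})$; the reverse inclusion is a factorization property --- every $h$ with $(g_{1}g_{2},h)\in L$ must decompose as $h=h_{1}h_{2}$ with $(g_{i},h_{i})\in L$ --- and this is \emph{not} a formal consequence of closure under products (a subset of a group closed under multiplication need not contain compatible factorizations of its elements; one would need at least closure under inverses together with surjectivity of $L$ onto the first factor). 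The paper avoids proving this inclusion altogether by a micro-geometric observation you omit: both composites in \eqref{morph:ass-1} are lagrangian submicrofolds of $[\overline{G\times G}\times H,\,A\times A\times B]$ with the same core map $\Delta_{A}\circ\phi=(\phi\times\phi)\circ\Delta_{B}$, and two germs of equidimensional submanifolds around the same core, one contained in the other, coincide; hence a single inclusion suffices. Without this step (or an actual proof of the factorization inclusion) your first direction is incomplete.

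A smaller point: your closing claim that closure under the local product ``supplies the missing inverses'' is heuristic as stated --- solvability of $\ell x=1_{t(\ell)}$ in the ambient local groupoid $\overline{G}\times H$ does not by itself place the solution in $L$. The paper's proof never addresses inverses in this proposition: it verifies only that the unit space of $L$ is $\graph\phi$, that the restricted source and target maps land in it, and that $L$ is closed under the product. So this part of your argument is both unnecessary for the statement as the paper proves it and, in the form given, not yet a proof.
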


\begin{proof}
(1) Suppose that $[L,\graph\phi]\rightrightarrows\graph\phi$ is a
lagrangian submicrogroupoid as above. Let us show that the corresponding
symplectic micromorphism $([L],\phi)$ is a monoid morphism. The unitality
axiom \eqref{morph:neut-1} follows from the fact that there a unique
symplectic micromorphism from $E$ to any given symplectic microfold;
thus, $e_{B}=([L],\phi)\circ e_{A}$. As for the associativity axiom
\eqref{morph:ass-1}, we have to check that
\[
\big([L\circ\graph[m_{G}],\;\Delta_{A}\circ\phi\big)=\big([\graph[m_{H}]\circ(L\times L),\;(\phi\times\phi)\circ\Delta_{B}\big).
\]
Since the core maps coincide, it is enough to show that one of the
lagrangian submicrofolds is included in the other. We do this by taking
a representative $L\in[L]$ and by showing that $L(g_{1})L(g_{2})\subset L(g_{1}g_{2})$.
Take $h_{1}h_{2}\in L(g_{1})L(g_{2})$, which implies that $(g_{1},h_{1})\in L$
and $(g_{2},h_{2})\in L$. Since $L$ is a subgroupoid, we also have
$(g_{1}g_{2},h_{1}h_{2})\in L$, which, in other words, means that
$h_{1}h_{2}\in L(g_{1}g_{2})$. 

(2) Suppose now that $([L],\phi)$ is a monoid morphism. Let us prove
that $[L,\graph\phi]$ is a lagrangian submicrogroupoid over $\graph\phi$.
We now need to show that, for a representative $L\in[L],$ the images
$(s_{G}\times s_{H})(L)$ and $(t_{G}\times t_{H})(L)$ coincide with
$L_{0}=\graph\phi=L\cap(A\times B)$. For this, we repeat an argument
of Zakrzewski \cite[Lemma 2.4]{zakrzewski1990I}. First of all, we
immediately have 
\begin{gather*}
L_{0}=(s_{G}\times s_{H})(L_{0})\subseteq(s_{G}\times s_{H})(L),\\
L_{0}=(t_{G}\times t_{H})(L_{0})\subseteq(t_{G}\times t_{H})(L).
\end{gather*}
Let us check the first converse inclusion for the source maps. Take
$(g,h)\in L$; we want to show that $(s_{G}(g),s_{H}(h))\in L_{0}$.
Since $\{g\}=(\graph m_{A})(s_{G}(g),g)$, and $L$ is a monoid morphism,
it follows that
\begin{eqnarray*}
h\in L(g) & = & L((\graph m_{A})(s_{G}(g),g)),\\
 & = & (\graph m_{B})(L_{0}(s_{G}(g)),L(g)).
\end{eqnarray*}
This means that the source of $h$ is contained in the subset $L_{0}(s_{G}(g))$,
or, in other words, that $(s_{G}(g),s_{H}(h))\in L_{0}$. We argue
in a similar way for the target inclusion. 

It remains to show that $L\rightrightarrows\graph\phi$ is closed
under the groupoid product: Take $(g_{1},h_{1})\in L$ and $(g_{2},h_{2})\in L$,
then $h_{1}h_{2}\in L(g_{1})L(g_{2})=L(g_{1}g_{2})$ since $L$ is
a monoid map, which means that $(h_{1}h_{2},g_{1}g_{2})\in L$ and
which ends the proof. 
\end{proof}

From Proposition \ref{pro: lag. gpd = mon. morph.}, we see
that the functor $\mon$ is a full embedding of $\ExtMicroSymplGpd$
into the category of monoids in the microsymplectic category.

\subsection{Equivalence of categories}

Composing now the full embedding $M$ constructed in the previous
paragraph with the symplectization functor (which is an equivalence
of categories) 
\[
\poisson\overset{\Sigma}{\longrightarrow}\ExtMicroSymplGpd\overset{\mon}{\longrightarrow}\Mon{\ExtMicroSympl},
\]
we obtain a full embedding $Z=M\circ\Sigma$, which we call the \textbf{Zakrzewski
functor}. 

At this point, we can ask if $Z$ is an equivalence of categories,
or, in other words, if every monoid in the microsymplectic category
arises from a symplectic microgroupoid. As the following example makes
it clear, this is not true in the macro case. Namely, a symplectic
groupoid has an additional structure that is not encoded in its corresponding
monoid: The graph of its inverse map endows the corresponding monoid
with the structure of a $\star$-monoid (see \cite{weinstein1991,zakrzewski1990II}
for details). The monoid, in the macro case, merely encodes the information
relative to the product, the source and the target maps of the symplectic
groupoid as well as its unit space. 

\begin{example}
\label{exam: sing. gpd}We give here a monoid object in $\ExtSympl$
that does not come from a symplectic groupoid. Consider the operation
$m(x_{1},x_{2})=x_{1}x_{2}$ on real numbers. We regard its cotangent
lift $\Cot m$ as a canonical relation from $\R^{2}\otimes\R^{2}$
to $\R^{2}$, where the symplectic structure on $\R^{2}$ comes from
the identification $\Cot\R_{x}\simeq\R_{x}\oplus\R_{y}$. Explicitly,
we have
\[
\Cot m:=\Big\{\Big((x_{1},yx_{2}),\,(x_{2},yx_{1}),\,(x_{1}x_{2},y)\Big):\: x_{1},x_{2},y\in\R\Big\}.
\]
One may verify directly the associativity of $\Cot m$, which follows
from that of $m$. There is a unit, which is given by the lagrangian
submanifold $E=\{(1,y):\; y\in\R\}$. We still have a source and a
target, which coincide here
\[
s=t=\pi:\R^{2}\rightarrow E;\;(x,y)\mapsto(1,xy).
\]
The restriction of $\Cot m$ to the fibers of $\pi$ over points $(1,\alpha)$
with $\alpha\neq0$ can be seen as the graph of a product
\[
\Big(x_{1},\frac{\alpha}{x_{1}}\Big)\bullet\Big(x_{2},\frac{\alpha}{x_{2}}\Big)=\Big(x_{1}x_{2},\frac{\alpha}{x_{1}x_{2}}\Big).
\]
Actually, this product endows the fiber $\pi^{-1}(1,\alpha)$, which
is the hyperbola $H_{\alpha}=\{(x,y):\: xy=\alpha\}$, as represented
in Figure \ref{fig:Hyperbole}, with a group structure. The unit is
the element $(1,\alpha)$, and the inverse is
\[
\Big(x,\frac{\alpha}{x},\Big)^{-1}=\Big(\frac{1}{x},\alpha x\Big).
\]
Although we are very close to having a symplectic groupoid, there
is a ``singular'' fiber; $\pi^{-1}((1,0))$ is the union of the two
intersecting lines $l_{x}=\{(x,0):\: x\in\R\}$ and $l_{y}=\{(0,y):\: y\in\R\}$.
The product is defined on $l_{x}\backslash\{(0,0)\}$ as the usual
real multiplication, but it is not defined at all on $l_{y}\backslash\{(0,0)\}$.
Moreover, the product is multi-valued at $(0,0)$; namely,
\[
(0,0)\bullet(0,0)=\{(0,y):\: y\in\R\}.
\]
Observe that the problematic points (i.e. the whole line $l_{y}$)
coincide with the locus of points where the inverse is not defined.

\begin{figure}
\centering{}\includegraphics[scale=0.5]{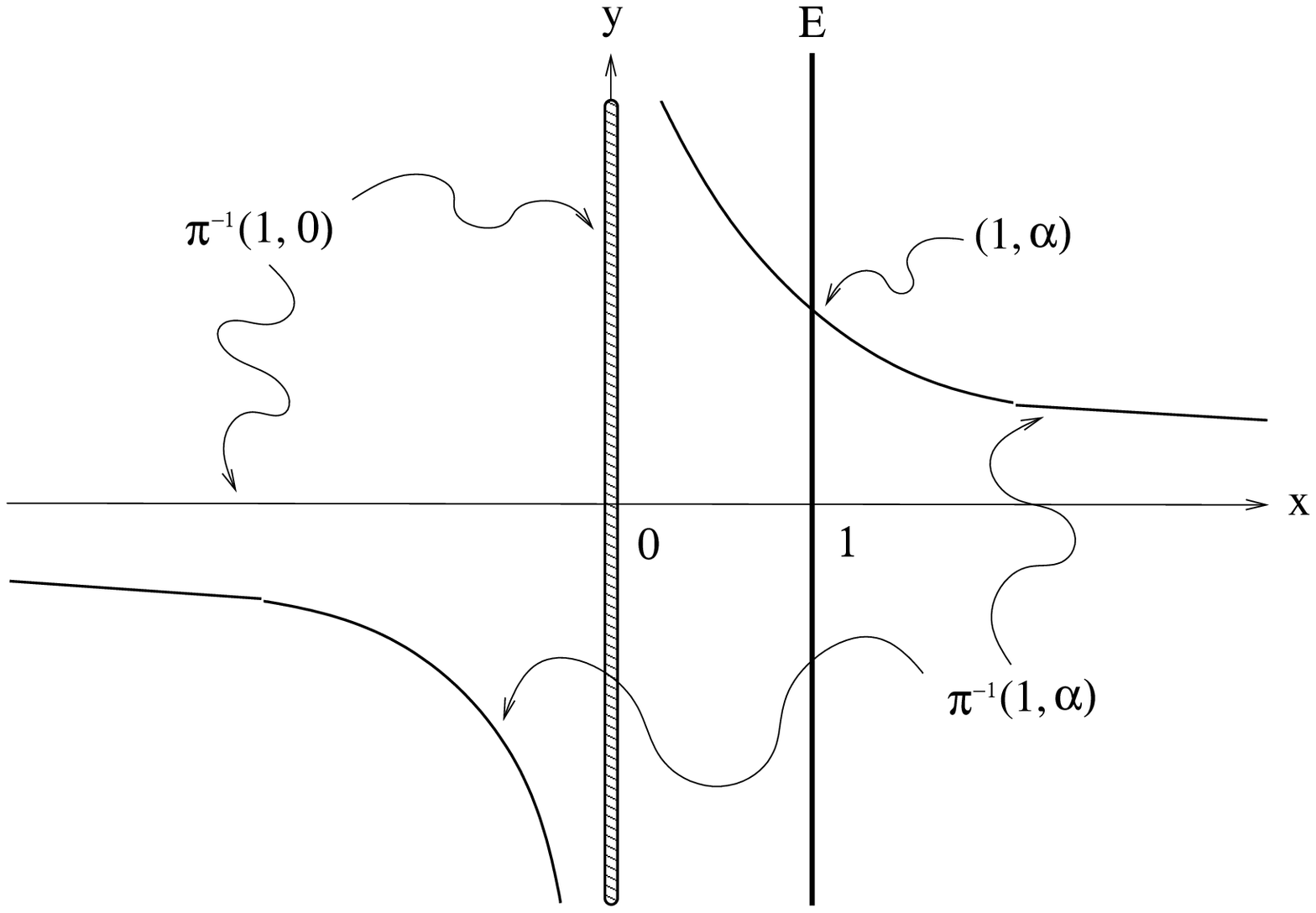}\caption{\label{fig:Hyperbole}}
\end{figure}

\end{example}

The situation in the micro-world is different. The following example
illustrates the fact that, even though a monoid object in the extended
symplectic category may not arise from a symplectic groupoid, its
restriction to a (micro) neighborhood of its unit space yields a monoid
object in the microsymplectic category, which itself comes from a
symplectic microgroupoid. This is essentially due to the fact that
all the structure maps (including the inverse map) of a local symplectic
groupoid can be recovered from the source map alone (\cite{CDW1987,karasev1987}). 

\begin{example}
Let us look at Example \ref{exam: sing. gpd} again. The monoid $(\R^{2},\Cot m,E)$
in $\ExtSympl$ fails to come from a symplectic groupoid because the
natural inverse map
\[
i(x,y)=(\frac{1}{x},yx^{2})
\]
can not be extended to the line $l_{y}$ of the ``singular'' fiber
$\pi^{-1}(1,0)=l_{x}\cup l_{y}$. This prevents this monoid from having
a $*$-monoid structure (see \cite{Canez2011,zakrzewski1990I,zakrzewski1990II}
for a definition of a $*$-monoid). However, if we look at the restriction
of $\Cot m$ to a neighborhood
\[
U_{\epsilon}:=\Big\{(1+\eta,y):\:\eta\in(-\epsilon,\epsilon),\, y\in\R\Big\},\quad0<\epsilon<1,
\]
we obtain a local symplectic groupoid over $E$ and a corresponding
symplectic micromorphism
\[
([\Cot m],\Delta_{E}):[\R^{2},E]\otimes[\R^{2},E]\rightarrow[\R^{2},E],
\]
turning $[\R^{2},E]$ into a monoid object in the microsymplectic
category. 

First of all, we will use the coordinates $q:=y$ on $E$ and $p:=x-1$
on the vertical fiber so that a point $(x,\frac{\alpha}{x})$ in the
$xy-$coordinates reads $(x-1,\frac{\alpha}{x})$ in the $pq$-coordinates.
Now the restriction of $\Cot m$ to $U_{\epsilon}$ becomes
\[
\Big\{\big((p_{1},q(p_{2}+1)\big),\big(p_{2},q(p_{1}+1)\big),\big(p_{1}+p_{2}+p_{1}p_{2},q\big):\:0<p_{1,},p_{2}<1,\, q\in\R\Big\}.
\]
In these coordinates, the source and target, which coincide, of the
local symplectic groupoid $U_{\epsilon}\rightrightarrows E$ are given
by the map $\pi(p,q)=q(p+1)$. Now, for each $\alpha\in\R$, the fiber
\[
\pi^{-1}(\alpha)=\left\{ \left(p,\frac{\alpha}{p+1}\right):\: p\in(-\epsilon,\epsilon)\right\} 
\]
is an honest submanifold, and the groupoid product and the inverse
map, restricted to this fiber, are given by
\begin{eqnarray*}
\left(p_{1},\frac{\alpha}{p_{1}+1}\right)\bullet\left(p_{2},\frac{\alpha}{p_{2}+1}\right) & = & \left(p_{1}+p_{2}+p_{1}p_{2},\;\frac{\alpha}{p_{1}+p_{2}+p_{1}p_{2}+1}\right),\\
\left(p,\frac{\alpha}{p+1}\right)^{-1} & = & \left(\frac{-p}{p+1},\frac{\alpha}{\left(\frac{-p}{p+1}\right)+1}\right).
\end{eqnarray*}
Figure \ref{fig:loc hyperboles} shows the local groupoid fibration
generated by the family of hyperbolae $H_{\alpha}$, for $\alpha\neq0$,
together with the line $l_{x}$ in a neighborhood of the unit space
$E$ in Example \ref{exam: sing. gpd}.

\begin{figure}
\begin{centering}
\includegraphics[scale=0.5]{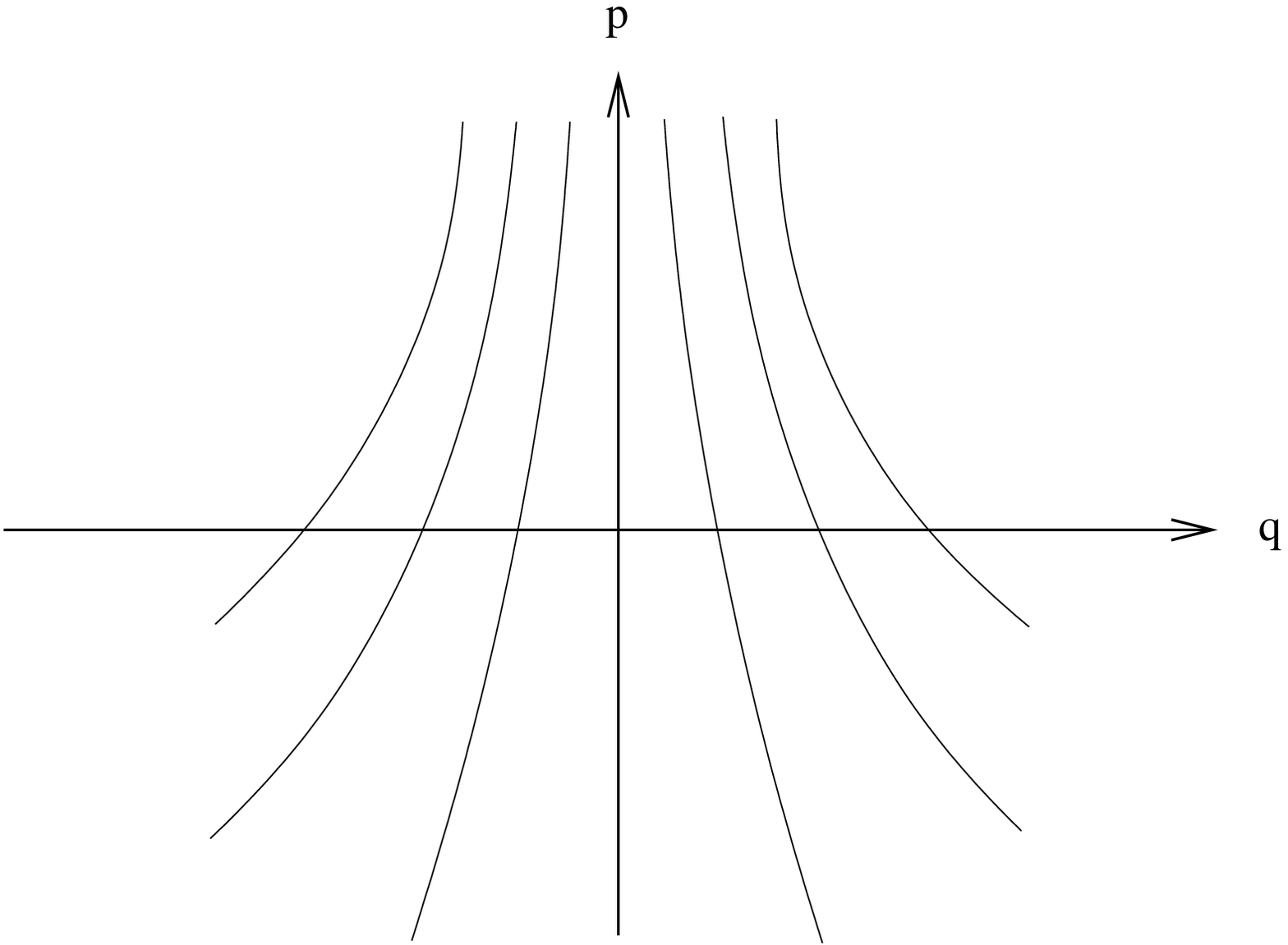}\caption{\label{fig:loc hyperboles}}
\par\end{centering}
\end{figure}

\end{example}

Actually, in the micro case, we have the following proposition:

\begin{prop} \label{prop: monoids=groupoids}
Every monoid in the microsymplectic category comes from a symplectic microgroupoid. 
\end{prop}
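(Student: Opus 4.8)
The plan is to reconstruct a local symplectic groupoid structure directly from the monoid data and then to recognize the monoid as its image under $\mon$. Write the monoid as $([G,A],([\mu],\phi),e_A)$, where $([\mu],\phi)\colon[G,A]\otimes[G,A]\to[G,A]$ is the product and $e_A=([\{\star\}\times A],\pr_A)$ is its unit, which is forced to be $e_A$ since this is the only symplectic micromorphism $E\to[G,A]$. First I would determine the core map $\phi\colon A\to A\times A$ from unitality: computing the core maps of both sides of \eqref{ass:neut-1} and using the identification $E\otimes[G,A]\simeq[G,A]$, the relation $\mu\circ(e_A\otimes\id)=\id$ becomes $(\pr_A\times\id_A)\circ\phi=\id_A$ and $\mu\circ(\id\otimes e_A)=\id$ becomes $(\id_A\times\pr_A)\circ\phi=\id_A$; writing $\phi=(\phi_1,\phi_2)$ these force $\phi_2=\id_A$ and $\phi_1=\id_A$, hence $\phi=\Delta_A$. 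This is the monoid-level counterpart of the fact that the base map of a groupoid product is the diagonal, and it is here that the unit axiom does its essential work.

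Next I would show that a representative $\mu\subset\overline{G\times G}\times G$, a germ of lagrangian submanifold around $\graph\Delta_A=\{(a,a,a):a\in A\}$ meeting the core $A\times A\times A$ cleanly there, is actually the germ of the graph of a smooth partial multiplication $m$ on $G$. The mechanism is that $\Delta_A$ is an embedding: working in a lagrangian tubular neighborhood $G\simeq\Cot A$, the clean intersection along $\graph\Delta_A$ forces the locus of composable pairs to be a germ of submanifold of $G\times G$ containing the diagonal $\{(a,a):a\in A\}$, and it forces the third component to depend single-valuedly and smoothly on the first two, so that $\mu$ is the graph of a map $m$ which one reads off from the local generating data of the micromorphism (see \cite{CDW2010,CDW2011}). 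Once $m$ is available, the associativity axiom \eqref{ass:ass-1} becomes associativity of $m$, the unit axiom \eqref{ass:neut-1} says that $A$ is a two-sided unit submanifold, and the lagrangian condition on $\mu$ is precisely the symplectic compatibility of $\graph m$.

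Finally I would reconstruct the remaining structure maps and conclude. By \cite{CDW1987,karasev1987}, a smooth associative local multiplication with unit submanifold $A$ on a symplectic germ in which $A$ is lagrangian assembles into a local symplectic groupoid: the source $s(g)$ (resp. target $t(g)$) is the unique unit $u\in A$ with $m(g,u)=g$ (resp. $m(u,g)=g$), and the inverse is determined by the source alone. Passing to germs, $[G,A]\rightrightarrows A$ is a symplectic microgroupoid whose product micromorphism is, by construction, $([\graph m],\Delta_A)=([\mu],\Delta_A)$; hence $\mon([G,A]\rightrightarrows A)=([G,A],[\mu],e_A)$ is the original monoid.

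I expect the second step to be the main obstacle. A symplectic micromorphism is only a germ of a lagrangian canonical relation, and such a germ need not be the graph of any map, the unit $e_A$ itself being a counterexample since its core map $\pr_A$ is not injective. The content of the argument is therefore to use the injectivity of $\Delta_A$ and the clean intersection along $\graph\Delta_A$ to promote $\mu$ to a genuine single-valued local product; once this is done, verifying the groupoid axioms and invoking the reconstruction of \cite{CDW1987,karasev1987} is essentially formal.
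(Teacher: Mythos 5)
Your first two steps track the paper's argument closely: the paper likewise uses the unit axiom to force the core map of $\mu$ to be $\Delta_A$, and then uses the fact that $\Delta_A$ is an embedding together with the structure theorem for symplectic micromorphisms (\cite[Theorem 27]{CDW2011}, which fibers $\mu$ over a collection of lagrangian germs $\{\mathcal{F}_xA\}_{x\in A}$ transverse to the core) to conclude that the germs $\mathcal{F}_xA^2$ are pairwise disjoint, hence glue into a coisotropic germ $\mathcal{C}$ on which the fiberwise maps $m_x$ assemble into a single-valued product $m$ with $\mu=\graph[m]$. You have correctly identified this as the crux, and your mechanism for it is the paper's.

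The gap is in your last step. The claim that an associative local product with unit submanifold $A$ and lagrangian graph ``assembles into a local symplectic groupoid'' by \cite{CDW1987,karasev1987} is not what those references provide: they show that the structure maps of an already-given local symplectic groupoid can be recovered from its \emph{source map}, which is circular here, since the source map is precisely what you still have to produce. Establishing this assembly is the entire second half of the paper's proof, and it is not formal. Existence, uniqueness and smoothness of $s(g)$ (the unique $a\in A$ with $(a,g)$ composable and $m(a,g)=g$) are extracted by using the unit axiom a second time, now at the level of the full micromorphisms rather than their core maps: the \emph{monicity} of the composition of symplectic micromorphisms, applied to $\graph m\circ(e_A\otimes\id)=\id$, yields a diffeomorphism from $(A\times G)\cap\mathcal{C}$ onto the diagonal of $G\times G$, whose inverse produces $s$; your unitality argument only extracts the core-map content of that axiom. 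The compatibilities $s(m(g_1,g_2))=s(g_1)$, $t(m(g_1,g_2))=t(g_2)$ and $t(g_1)=s(g_2)$ for composable pairs then require the Zakrzewski-style manipulations of associativity, and the inverse map is constructed by applying the implicit function theorem to $m_x(g_1,g_2)=x$ on each fiber $\mathcal{F}_xA^2$ (using $\partial_{g_i}m_x(x,x)=\id$), gluing over the partition $\{\mathcal{F}_xA^2\}_{x\in A}$ of $\mathcal{C}$, and checking that the left and right inverses so obtained agree. None of this follows from the bare data of a unital associative product, so the proof as written does not close.
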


\begin{proof}
To begin with, we show that $\mu=\graph[m]$, where $[m]$ is the
germ of a map $m:\mathcal{C}\rightarrow G$, where $\mathcal{C}$
is a representative of a coisotropic submicrofold $[\mathcal{C},\graph\id_{A}]\subset[G\times G,A\times A]$.
To see this, we will use the result \cite[Theorem 27]{CDW2011}, which,
applied to the symplectic micromorphism
\[
\mu=([V],\phi):[G^{2},A^{2}]\rightarrow[G,A],
\]
states the following: For any collection $\mathcal{F}A:=\{[\mathcal{F}_{x}A,\{x\}]\}_{x\in A}$
of lagrangian submanifold germs in $[G,A]$ transverse to $A$, there
is a corresponding collection
\[
\mathcal{F}A^{2}:=\big\{\big[\mathcal{F}_{x}A^{2},\{\phi(x)\}\big]\big\}_{x\in A}
\]
of lagrangian submanifold germs in $[G^{2},A^{2}]$, each of which
is transverse to the core $A^{2}$, together with a collection of
map germs
\[
m_{x}:\mathcal{F}_{x}A^{2}\rightarrow\mathcal{F}_{x}A,\quad x\in A,
\]
such that, for suitable representatives,
\begin{eqnarray*}
\graph m_{x} & = & V\cap(\mathcal{F}_{x}A^{2}\times\mathcal{F}_{x}A),\\
V & = & \bigcup_{x\in A}\graph m_{x}.
\end{eqnarray*}
The key point now is to note that the unitality axiom \eqref{ass:neut-1}
forces the core map of $\mu$ to be the diagonal map $\Delta_{A}:A\rightarrow A\times A$,
which is an embedding. This implies that the lagrangian germs in the
collection $\mathcal{F}A^{2}$ are all disjoint. Therefore, their
union
\[
\mathcal{C}:=\bigcup_{x\in A}\mathcal{F}_{x}A^{2}
\]
is a germ of a coisotropic submanifold around $\{(x,x):x\in A\}$,
on which the collection $\{m_{x}\}_{x\in A}$ defines a map $m:\mathcal{C}\rightarrow A$
such that $V=\graph m$. This is the (local) symplectic groupoid product,
and $\mathcal{C}$ is the space of composable pairs. 

Now, let us extract the source, target and inverse maps while verifying
the local groupoid axioms. The associativity axiom \eqref{ass:ass-1}
in term of $m$ yields directly that 
\begin{equation}
(g_{1},g_{2})\in\mathcal{C},\,(m(g_{1},g_{2}),g_{3})\in\mathcal{C}\quad\Rightarrow\quad(g_{2},g_{3})\in\mathcal{C},\,(g_{1},m(g_{2},g_{3}))\in\mathcal{C}.\label{N1}
\end{equation}
Moreover in this case we have that
\begin{equation}
m(m(g_{1},g_{2}),g_{3})=m(g_{1},m(g_{2},g_{3})).\label{N2}
\end{equation}
Since $\graph[m]$ is a symplectic micromorphism with core map $\Delta:A\rightarrow A\times A$,
we have that $\graph m\cap(A\times A\times G)=\graph\Delta$. Hence,
for $x_{1},x_{2}\in A$, we have that $(x_{1},x_{2})\in\mathcal{C}$
iff $x_{1}=x_{2}=x$ and then $m(x,x)=x$. 

We can extract the source and target maps from the fact that the composition
of symplectic micromorphisms is always monic (see \cite{CDW2011})
when applied to the compositions involved in the unitality axiom \eqref{ass:neut-1}.
Namely, consider the reduction
\[
\pi:G\times\Delta_{G\times G}\times G\rightarrow G\times G
\]
associated with the composition
\[
G\simeq\{*\}\times G\overset{e_{A}\times\id}{\longrightarrow}G\times G\overset{\graph m}{\longrightarrow}G.
\]
The monicity of the composition can be expressed by saying that the
restriction $\pi'$ of $\pi$ to 
\[
K:=(A\times\Delta_{G}\times\graph m)\cap(G\times\Delta_{G\times G}\times G)
\]
is a diffeomorphism on its image $\graph m\circ(e_{A}\otimes\id)$.
Now, using the identification $K\simeq(A\times G)\cap\mathcal{C}$,
the restriction $\pi'$ reads
\[
\pi':(A\times G)\cap\mathcal{C}\rightarrow G\times G:\:(a,g)\rightarrow(g,m(a,g))=(g,g).
\]
Taking the inverse of $\pi'$ on its image (which is the diagonal
in $G\times G$ by the unitality axiom), we obtain a smooth surjective
map $s:G\rightarrow A$ for which $s(g)$ is the unique element in
$A$ such that
\begin{equation}
(s(g),g)\in\mathcal{C}\quad\textrm{and}\quad m(s(g),g)=g\quad\textrm{for all}\quad g\in G.\label{N4}
\end{equation}
In other words, we have obtained the source map of the local symplectic
groupoid. The same line of argument for the other composition $\graph m\circ(\id\otimes e_{A})$
delivers the target map $t:G\rightarrow A$. Let us check now that
these maps behave the way they should with respect to the product
$m$; for this, we use here essentially the same arguments as in Zakrzewski
\cite[Lemma 2.3]{zakrzewski1990I}. Let $(g_{1},g_{2})\in\mathcal{C}$,
then
\[
\begin{gathered}m(g_{1},g_{2})=m(m(s(g_{1}),g_{1}),g_{2})=m(s(g_{1}),m(g_{1},g_{2})),\\
m(g_{1},g_{2})=m(g_{1},m(g_{2},t(g_{2})))=m(m(g_{1},g_{2}),t(g_{2})),
\end{gathered}
\]
which implies, by uniqueness of the source and target map, that
\begin{equation}
s(m(g_{1},g_{2}))=s(g_{1})\quad\textrm{and}\quad t(m(g_{1},g_{2}))=t(g_{2}).\label{N5}
\end{equation}
Since we also have, for $(g_{1},g_{2})\in\mathcal{C}$, that 
\[
m(g_{1},g_{2})=m(m(g_{1},t(g_{1})),g_{2})=m(g_{1},m(t(g_{1}),g_{2})),
\]
which implies that $(t(g_{1}),g_{2})\in\mathcal{C}$, we obtain that
\begin{equation}
\forall(g_{1},g_{2})\in\mathcal{C},\quad t(g_{1})=s(g_{2})\label{N6}
\end{equation}
because $s(g_{2})$ is the unique element $x\in A$ such that $(x,g_{2})$
is composable. 

To extract the inverse map, we consider the restriction $m_{x}$ of
the product $m$ to the fiber $\mathcal{F}_{x}A^{2}$ and the implicit
equation $m_{x}(g_{1},g_{2})=x$. Since $m_{x}(x,x)=x$ and $\partial_{g_{1}}m_{x}(x,x)=\partial_{g_{2}}m_{x}(x,x)=\id$,
we can apply the implicit function theorem in both arguments, obtaining
two families of smooth functions, $\{i_{x}\}_{x\in A}$ and $\{j_{x}\}_{x\in A}$,
such that
\[
m_{t(g)}(i_{t(g)}(g),g)=t(g)\textrm{ and }m_{s(g)}(g,j_{s(g)}(g))=s(g).
\]
The collection $\{\mathcal{F}_{x}A^{2}\}_{x\in A}$ of lagrangian
submanifolds is a partition of $\mathcal{C}$, and so we can glue
the two collections $\{i_{x}\}_{x\in A}$ and $\{j_{x}\}_{x\in A}$
into smooth functions $i,j:G\rightarrow G$ such that
\begin{equation}
m(i(g),g)=t(g)\textrm{ and }m(g,j(g))=s(g).\label{N7}
\end{equation}
Since, by construction, $(i(g),g),\,(g,j(g))\in\mathcal{C}$, we also
have that
\begin{equation}
t(i(g))=s(g)\textrm{ and }t(g)=s(j(g)).\label{N8}
\end{equation}
 Let us show that $i$ and $j$ coincide. Namely, $(m(i(g),g),j(g))\in\mathcal{C}$
because $m(i(g),g)=t(g)=s(j(g))$ and thus
\[
j(g)=m(m(i(g),g),j(g))=m(i(g),m(g,j(g))=i(g).
\]
\end{proof}

As explained in Section \ref{sub:Categories-of-monoids}, the category
of monoids in the microsymplectic category inherits the structure
of a symmetric monoidal category. 

\begin{thm}
The functor $\mon:\ExtMicroSymplGpd\rightarrow\Mon{\ExtMicroSympl}$ is an equivalence of symmetric monoidal categories.
\end{thm}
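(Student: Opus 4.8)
The final theorem asserts that
\[
\mon:\ExtMicroSymplGpd\longrightarrow\Mon{\ExtMicroSympl}
\]
is an equivalence of symmetric monoidal categories. The plan is to establish the three standard criteria for an equivalence — full, faithful, essentially surjective — and then to verify compatibility with the monoidal and symmetry structures, reusing the results already assembled in this section.

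**Full and faithful.** For fullness and faithfulness, I would invoke Proposition~\ref{pro: lag. gpd = mon. morph.} directly. That proposition establishes a one-to-one correspondence between monoid morphisms $([L],\phi):\mon([G,A])\to\mon([H,B])$ and lagrangian submicrogroupoids $[L,\graph\phi]\rightrightarrows\graph\phi$ inside the product microgroupoid, which are precisely the morphisms of $\ExtMicroSymplGpd$ from $[G,A]\rightrightarrows A$ to $[H,B]\rightrightarrows B$. Since $\mon$ is the identity on the underlying symplectic micromorphism, this bijection says exactly that $\mon$ induces a bijection on each hom-set, i.e. $\mon$ is a full and faithful embedding. (The text already remarks this immediately after the proof of that proposition.)

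**Essential surjectivity.** This is the main obstacle and the place where the real content lives, but the work has in fact already been done in Proposition~\ref{prop: monoids=groupoids}. That proposition shows that every monoid object in $\ExtMicroSympl$ arises as $\mon([G,A])$ for some symplectic microgroupoid $[G,A]\rightrightarrows A$: starting from a monoid $([G,A],\mu,e_A)$, one uses the structure theorem \cite[Theorem 27]{CDW2011} to write $\mu=\graph[m]$ for a genuine product germ $m:\mathcal{C}\to G$ on a coisotropic space of composable pairs, and then extracts source, target and inverse maps (verifying the local groupoid axioms) from the associativity and unitality axioms via the monicity of micromorphism composition. Thus every object in the image of $\mon$ is hit up to isomorphism, giving essential surjectivity. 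I would simply cite Proposition~\ref{prop: monoids=groupoids} here; together with full and faithfulness this already yields that $\mon$ is an equivalence of (ordinary) categories.

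**Monoidal and symmetric structure.** It remains to check that the equivalence is monoidal and respects the symmetries. For this I would compare, on objects, the monoid $\mon([G,A]\otimes[H,B])$ attached to the product microgroupoid $[G\times H,A\times B]\rightrightarrows A\times B$ with the tensor product $\mon([G,A])\otimes\mon([H,B])$ of monoids, whose product is $\mu_{C\otimes D}=(\mu_C\otimes\mu_D)\circ(\id\otimes\epsilon\otimes\id)$ as defined in Section~\ref{sub:Categories-of-monoids}. The point is that the graph of the product on $G\times H$ is, after permuting the middle factors by the symmetry $\epsilon$, exactly the tensor product of the graphs of the products on $G$ and on $H$; this is checked on representatives, where it reduces to the obvious identity $\graph m_{G\times H}=(\graph m_G)\otimes(\graph m_H)$ under the middle-factor permutation, and the units match because $e_{A\times B}=e_A\otimes e_B$. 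Naturality of these identifications in both arguments, and their compatibility with the factor-permutation symmetries $\epsilon$, is then routine and is verified by passing to representatives, exactly as in the analogous monoidality statement for $\Sigma$. Hence $\mon$ is an equivalence of symmetric monoidal categories, completing the proof.
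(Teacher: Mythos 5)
Your proposal is correct and follows essentially the same route as the paper: fullness and faithfulness from Proposition \ref{pro: lag. gpd = mon. morph.}, surjectivity on objects from Proposition \ref{prop: monoids=groupoids}, and the monoidal/symmetric compatibility by identifying the graph of the product on $G\times H$ with the tensor product of the graphs of the products on $G$ and $H$ after the middle-factor permutation, with units matching. The only cosmetic difference is that the paper phrases the object-level statement as an isomorphism of categories rather than mere essential surjectivity, but this does not change the argument.
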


\begin{proof}
Proposition \ref{prop: monoids=groupoids} tells us that $M$
is an isomorphism on the objects and Proposition \ref{pro: lag. gpd = mon. morph.}
tells us that $M$ is full and faithful. Hence, we have an isomorphism
of categories. 

Given two symplectic microgroupoids $[G,A]\rightrightarrows A$ and
$[H,B]\rightrightarrows B$ the groupoid operation in their tensor
product is given by
\[
[m_{G\times H}]:=([m_{G}]\times[m_{H}])\circ(\id_{G}\times\epsilon_{G,H}\times\id_{H}).
\]
Therefore, the image by $M$ of the symplectic microgroupoid product
is the monoid
\[
\Big([G\times H,A\times B],\,\mu,\, e_{A\times B}\Big),
\]
where
\[
\mu:=(\graph[m_{G}]\otimes\graph[m_{H}])\circ(\id_{[G,H]}\otimes\graph\epsilon_{G,H}\otimes\id_{H}).
\]
From Section \ref{sub:Categories-of-monoids}, we see that this monoid
coincides with the tensor product of the monoids; i.e., 
\[
M([G\times H,A\times B)=M([G,A])\otimes M([H,B]).
\]
 A similar argument shows that we also have this property for the
morphisms; i.e.,
\[
M([V\otimes W],\phi\times\psi)=M([V],\phi)\otimes M([W],\psi).
\]
Moreover, the unit symplectic microgroupoid $[\{0\}\times\{\star\},\{\star\}]\rightrightarrows\{\star\}$
is sent on the unit monoid by $M$. Thus, we have an equivalence of
monoidal categories. Checking that $M$ also respects the symmetries
is straightforward. 
\end{proof}

Now, composing the following equivalences
\[
\poisson\overset{\Sigma}{\longrightarrow}\ExtMicroSymplGpd\overset{\mon}{\longrightarrow}\Mon{\ExtMicroSympl},
\]
we obtain the main theorem of this paper:

\begin{thm} \label{thm: main}
The symmetric monoidal category of Poisson manifolds
and Poisson maps is equivalent to the symmetric monoidal category
of monoids in the microsymplectic category.
\end{thm}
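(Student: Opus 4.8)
The plan is to deduce this statement directly by composing the two equivalences of symmetric monoidal categories already established in the preceding two theorems. Recall that the symplectization functor $\Sigma:\poisson\rightarrow\ExtMicroSymplGpd$ was shown to be an equivalence of symmetric monoidal categories, and that the functor $\mon:\ExtMicroSymplGpd\rightarrow\Mon{\ExtMicroSympl}$ was likewise shown to be such an equivalence. The Zakrzewski functor $Z=\mon\circ\Sigma$ is then the composite
\[
\poisson\overset{\Sigma}{\longrightarrow}\ExtMicroSymplGpd\overset{\mon}{\longrightarrow}\Mon{\ExtMicroSympl},
\]
and I would argue that it inherits all the required properties from its two factors.

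First I would invoke the standard categorical fact that the composite of two equivalences of categories is again an equivalence: full faithfulness is preserved under composition, while the essential surjectivity of $Z$ follows from that of $\mon$ together with that of $\Sigma$. Concretely, any monoid in $\ExtMicroSympl$ is isomorphic to $\mon([G,A])$ for some symplectic microgroupoid by Proposition \ref{prop: monoids=groupoids}, and $[G,A]$ is in turn isomorphic to $[\Sigma(A),A]$ for the underlying Poisson manifold $A$, so every monoid lies in the essential image of $Z$. Next I would note that the composite of symmetric monoidal functors is again symmetric monoidal: the structure isomorphisms for $Z$ are obtained by composing those of $\Sigma$ and $\mon$, and the coherence with the associators, unitors and symmetries follows from the corresponding coherence of each factor, which was verified in the two preceding proofs. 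Hence $Z$ is an equivalence of symmetric monoidal categories, which is precisely the assertion.

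At this level there is no genuine obstacle: the entire mathematical content has been front-loaded into the two component theorems, and in particular into Proposition \ref{prop: monoids=groupoids}, whose proof recovers the source, target and inverse maps of a local symplectic groupoid from the monoid data alone. What remains here is only the bookkeeping needed to confirm that the monoidal and symmetric structures transport correctly through the composition, which is routine once each factor has been certified to respect them. I would therefore keep the argument short, citing the two preceding theorems and the general principle that equivalences of symmetric monoidal categories compose to an equivalence of symmetric monoidal categories.
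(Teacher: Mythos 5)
Your proposal is correct and follows exactly the paper's own argument: the theorem is obtained by composing the equivalence $\Sigma:\poisson\rightarrow\ExtMicroSymplGpd$ with the equivalence $\mon:\ExtMicroSymplGpd\rightarrow\Mon{\ExtMicroSympl}$, using the standard fact that a composite of equivalences of symmetric monoidal categories is again one. The paper states this in a single line, so your slightly more explicit bookkeeping of full faithfulness, essential surjectivity, and the monoidal structure maps is, if anything, more detailed than the original.
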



\begin{thebibliography}{References}

\bibitem{weinstein1996} S. Bates and A. Weinstein, \textit{Lectures
on the geometry of quantization}, Berkeley Mathematics Lecture Notes
\textbf{8}, Amer. Math. Soc. (1997).

\bibitem{Bieliavsky2002}P. Bieliavsky, Strict quantization of solvable
symmetric spaces, \textit{J. Symplectic Geom.} \textbf{1} (2002),
269--320.

\bibitem{BDS2009}P. Bieliavsky, S. Detournay and P. Spindel, The deformation
quantizations of the hyperbolic plane, \textit{Comm. Math. Phys.}
\textbf{289} (2009), 529--559.

\bibitem{Canez2011}S. Canez, Double Groupoids, Orbifolds, and the
Symplectic Category, PhD thesis, University of California Berkeley
(2011), e-print http://arxiv.org/abs/1105.2592v1.

\bibitem{cattaneo2004}A. S. Cattaneo, On the integration of Poisson
manifolds, Lie algebroids, and coisotropic submanifolds, \textit{Lett.
Math. Phys.} \textbf{67} (2004), 33--48. 

\bibitem{CF2001}A. S. Cattaneo and G. Felder, Poisson sigma models
and symplectic groupoids, \textit{Quantization of singular symplectic
quotients}, Progr. Math. \textbf{198} (2001), 61--93.

\bibitem{CDW1987} A. Coste, P. Dazord and A. Weinstein, \textit{Groupoides
symplectiques}, Publications du Departement de Mathematiques, Nouvelle
Serie\textit{ A}, Vol. \textbf{2}, Univ. Claude-Bernard, Lyon, (1987).
http://math.berkeley.edu/$\sim$alanw/cdw.pdf.

\bibitem{CDW2010}A. S. Cattaneo, B. Dherin and A. Weinstein, Symplectic
microgeometry I: micromorphisms\textit{,} \textit{J. Symplectic Geom.}
\textbf{8} (2010), 205--223. 

\bibitem{CDW2011} A. S. Cattaneo, B. Dherin and A. Weinstein, Symplectic
microgeometry II: generating functions, eprint arXiv:1103.0672. To
appear in Proceedings of Poisson 2010, Bull. Brazilian Math. Soc.

\bibitem{CF2004}M. Crainic and R. L. Fernandes, Integrability of Poisson
brackets. \textit{J. Differential Geom.} \textbf{66} (2004), 71--137.

\bibitem{fernandes2006}R. L. Fernandes, The symplectization functor,
\textit{XV International Workshop on Geometry and Physics}, Publ.
R. Soc. Mat. Esp. \textbf{11} (2007), 67--82.

\bibitem{GBJV1995}J. Gracia-Bondia and J. C. V\'arilly, From geometric
quantization to Moyal quantization, \textit{J. Math. Phys.} \textbf{36}
(1995), 2691--2701. 

\bibitem{Hawkins2008}E. Hawkins, A groupoid approach to quantization,
\textit{J. Symplectic Geom. }\textbf{6} (2008), 61--125.

\bibitem{Karabegov2003}A. V. Karabegov, On the dequantization of Fedosov's deformation quantization,
 \textit{Lett. Math. Phys.}  \textbf{65}  (2003), 133--146.

\bibitem{Karabegov2004}A. V. Karabegov, On the inverse mapping of the formal symplectic groupoid of a
 deformation quantization, \textit{Lett. Math. Phys.}  \textbf{70}  (2004), 43--56.

\bibitem{Karabegov2005}A. V. Karabegov, Formal symplectic groupoid of a deformation quantization,  \textit{Comm. Math. Phys.}  \textbf{258}  (2005), 223--256.

\bibitem{Karabegov2006}A. V. Karabegov, Fedosov's formal symplectic groupoids and contravariant
 connections,  \textit{J. Geom. Phys.}  \textbf{56}  (2006), 1985--2009.

\bibitem{karasev1987} M. V. Karasev, Analogues of the objects of
Lie group theory for nonlinear Poisson brackets. \textit{Math. USSR
Izvestiya} \textbf{28} (1987), 497--527.

\bibitem{KM1993} M.V. Karasev and V. P. Maslov, \emph{Nonlinear Poisson
brackets. Geometry and quantization}, Translations of Mathematical
Monographs \textbf{119}, AMS (1993).

\bibitem{Landsman1993}N. P. Landsman, Quantization as a functor\textit{,
Quantization, Poisson brackets and beyond} (Manchester, 2001), Contemp.
Math.\textbf{ 315} (2002), 9--24.

\bibitem{mackenzie1987}K. Mackenzie, \textit{Lie Groupoids and Lie
Algebroids in Differential Geometry}, London Mathematical Society
Lecture Note Series\textbf{ 124}, Cambridge University Press (1987).

\bibitem{MacLane1998}S. Mac Lane, \textit{Categories for the Working
Mathematician}, Second edition, Graduate Texts in Mathematics \textbf{5},
Springer-Verlag (1998).

\bibitem{RT2008}P. de M. Rios and G. M. Tuynman, Weyl quantization from
geometric quantization, AIP Conf. Proc. (2008), 26-38.

\bibitem{TZ2005}H. Tseng and C. Zhu, Integrating Poisson manifolds via
stacks, \textit{Trav. Math.} \textbf{16} (2005), 285--297, Univ. Luxemb. 

\bibitem{weinstein1991}A. Weinstein, Noncommutative geometry and
geometric quantization, \textit{Symplectic geometry and mathematical
physics} (Aix-en-Provence, 1990), Progr. Math., \textbf{99} (1991),
446--461.

\bibitem{weinstein1994} A. Weinstein, Traces and triangles in symmetric
symplectic spaces. Contemporary Mathematics \textbf{179} (1994).

\bibitem{weinstein1997}A. Weinstein, Tangential deformation quantization
and polarized symplectic groupoids, \textit{Deformation theory and
symplectic geometry} (Ascona, 1996), Math. Phys. Stud. \textbf{20}
(1997), 301--314

\bibitem{WW2010}C. Woodward and K. Wehrheim, Functoriality for lagrangian
correspondences in Floer theory, \textit{Quantum Topol.} \textbf{1}
(2010), 129--170.

\bibitem{zakrzewski1990I}S. Zakrzewski, Quantum and classical pseudogroups.
Part I. Union pseudogroups and their quantization, \textit{Comm. Math.
Phys.} \textbf{134} (1990), 347-370.

\bibitem{zakrzewski1990II} S. Zakrzewski, Quantum and classical pseudogroups.
Part II. Differential and Symplectic Pseudogroups\textit{, Comm. Math.
Phys.} \textbf{134} (1990), 371-395 . \end{thebibliography}
\end{document}